\def\thm@space@setup{%
  \thm@preskip=\parskip \thm@postskip=0pt
}
\declaretheoremstyle[%
  spaceabove=6pt,%
  spacebelow=6pt,%
  headfont=\normalfont\itshape,%
  postheadspace=1em,%
  qed=\qedsymbol%
]{mystyle}
\def\qed{\hfill\ifhmode\unskip\nobreak\fi\quad\ifmmode\Box\else\hfill$\Box$\fi}
\def\ite#1{\hfill\break${}$\hbox to 50pt {\quad(#1)\hfill}}
\newtheorem{thm}{Theorem}
\newtheorem{cor}{Corollary}
\newtheorem{problem}{Problem}
\newtheorem{lemma}[thm]{Lemma}
\newtheorem{conj}{Conjecture}
\newtheorem{prop}{Proposition}
\newtheorem{defn}{Definition}
\tikzstyle{vertex}=[circle,fill=black,inner sep=2pt]
\tikzstyle{vertrect}=[draw,rectangle,inner sep=2pt]
\tikzstyle{vertdia}=[draw,diamond,inner sep=2pt]
\newcommand{\N}{\mathbb{N}}
\newcommand{\A}{\mathcal{A}}
\newcommand{\F}{\mathcal{F}}
\newcommand{\B}{\mathcal{B}}
\titleformat{\subsection}[runin]
        {\normalfont\bfseries}
        {\thesubsection}
        {0.4em}
        {}
        [.]
\begin{document}
\title{A note on $k$-wise oddtown problems} 
\author{
	Jason O'Neill
	\footnote{Research supported by NSF award DMS-1800332} \\
	Department of Mathematics, \\
	University of California, San Diego \\
	\texttt{jmoneill@ucsd.edu}
	\and
	Jacques Verstra\"{e}te
\footnotemark[1] \\
	Department of Mathematics, \\
	University of California, San Diego \\
	\texttt{jverstra@math.ucsd.edu}
} 
\maketitle
\begin{abstract}
For integers $2 \leq t \leq k$, we consider a collection of $k$ set families $\mathcal{A}_j: 1 \leq j \leq k$ where $\mathcal{A}_j = \{ A_{j,i} \subseteq [n] : 1 \leq i \leq m \}$ and $|A_{1, i_1} \cap \cdots \cap A_{k,i_k}|$ is even if and only if at least $t$ of the $i_j$ are distinct. In this paper, we prove that $m =O(n^{ 1/ \lfloor k/2 \rfloor})$ when $t=k$ and $m = O( n^{1/(t-1)})$ when $2t-2 \leq k$ and prove that both of these bounds are best possible. Specializing to the case where $\A = \A_1 = \cdots = \A_k$, we recover a variation of the classical oddtown problem. 
\end{abstract}

\section{Introduction}
Given a collection $\A$ of subsets of an $n$ element set, $\A$ follows \textit{oddtown rules} if the sizes of all sets in $\A$ are odd and distinct pairs of sets from $\A$ have even sized intersections. Berlekemp \cite{BERK} proved that the size of a family which satisfies oddtown rules is at most $n$ which is easily seen to be best possible. One generalization of this classical result is the \textit{skew oddtown theorem} which appears in Babai and Frankl \cite{BF}: 

\begin{thm}[Babai, Frankl \textup{\cite{BF}}]\label{thm:skewoddtown}
Let $\mathcal{A} = \{A_1,A_2,\dots,A_m\}$ and $\mathcal{B} = \{B_1,B_2,\dots,B_m\}$ be families of subsets of an $n$ element set where $|A_i \cap B_i|$ is odd for all $i \in [m]$ and $|A_i \cap B_j|$ is even for all $i<j \in [m]$. Then $m \leq n$. 
\end{thm}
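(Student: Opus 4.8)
The plan is to run the standard dimension argument over the two-element field $\mathbb{F}_2$. To each $A_i \subseteq [n]$ I attach its characteristic vector $\mathbf{a}_i \in \mathbb{F}_2^n$, and to each $B_i$ its characteristic vector $\mathbf{b}_i \in \mathbb{F}_2^n$. With $\langle \cdot,\cdot\rangle$ the standard bilinear form on $\mathbb{F}_2^n$, one has $\langle \mathbf{a}_i,\mathbf{b}_j\rangle = |A_i\cap B_j| \bmod 2$, so the hypotheses become $\langle \mathbf{a}_i,\mathbf{b}_i\rangle = 1$ for all $i$ and $\langle \mathbf{a}_i,\mathbf{b}_j\rangle = 0$ whenever $i<j$.

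The main step is to show that $\mathbf{b}_1,\dots,\mathbf{b}_m$ are linearly independent over $\mathbb{F}_2$; since they lie in an $n$-dimensional space, this immediately yields $m\le n$. Suppose instead that $\sum_{i=1}^m \lambda_i\mathbf{b}_i = \mathbf{0}$ is a nontrivial relation, and let $k$ be the smallest index with $\lambda_k\ne 0$. Pairing this relation with $\mathbf{a}_k$ and using bilinearity gives $\sum_i \lambda_i\langle\mathbf{a}_k,\mathbf{b}_i\rangle = 0$. Every term with $i<k$ vanishes because $\lambda_i = 0$, and every term with $i>k$ vanishes because $\langle\mathbf{a}_k,\mathbf{b}_i\rangle = 0$ (as $k<i$); what survives is $\lambda_k\langle\mathbf{a}_k,\mathbf{b}_k\rangle = \lambda_k$, forcing $\lambda_k = 0$ in $\mathbb{F}_2$, a contradiction.

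There is really no obstacle here beyond bookkeeping: the one point that needs care is lining up the asymmetry ``$i<j$'' in the hypothesis with the choice of the \emph{least} index in the dependence (paired against $\mathbf{a}_k$), so that precisely one of the $m$ terms in the expanded inner product survives; choosing the greatest index and pairing against $\mathbf{b}_k$ would work symmetrically to prove independence of the $\mathbf{a}_i$ instead. Finally I would remark that the bound is sharp, as witnessed by $A_i = B_i = \{i\}$ for $i\in[n]$, and that taking $\mathcal{A} = \mathcal{B}$ recovers Berlekamp's oddtown inequality as a special case.
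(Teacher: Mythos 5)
Your proof is correct: the triangular structure of the matrix $\bigl(\langle \mathbf{a}_i,\mathbf{b}_j\rangle\bigr)$ over $\mathbb{F}_2$ (zeros above the diagonal, ones on it) is exactly what makes the least-index choice in the dependence relation kill all but one term, and this is the standard argument from Babai--Frankl. The paper cites the theorem without reproducing a proof, but your argument is the canonical one and is the same pairing-against-a-dependence technique the paper itself uses in its Proposition 1.
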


In this paper, we give analogs of Theorem \ref{thm:skewoddtown} for $k\geq 3$ families. Our main result is as follows:

\begin{thm}\label{thm:main1}
Let $(\mathcal{A}_1,\mathcal{A}_2,\dots,\mathcal{A}_k)$ be set families of an $n$ element set with $\mathcal{A}_{j} = \{A_{j,i} : 1 \leq i \leq m\}$ where
$|\bigcap_{j = 1}^k A_{j,i_j}|$ is even if and only if $i_1, \ldots, i_k \in [m]$ are all distinct. Then $m = O(n^{ 1/ \lfloor k/2 \rfloor})$.
\end{thm}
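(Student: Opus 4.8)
The plan is to use the linear-algebra (tensor) method over $\mathbb{F}_2$, after first reducing to the case of an even number of families.

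\textbf{Reduction to even $k$.} Suppose $k=2s+1$ is odd. Fix any $c\in[m]$ and set $\mathcal{A}_j'=\{\,A_{j,i}\cap A_{k,c}:i\in[m]\setminus\{c\}\,\}$ for $1\le j\le k-1$. If $i_1,\dots,i_{k-1}\in[m]\setminus\{c\}$, then $\bigcap_{j=1}^{k-1}(A_{j,i_j}\cap A_{k,c})$ is the intersection of the $k$ sets with index tuple $(i_1,\dots,i_{k-1},c)$; since each $i_j\ne c$, this tuple has pairwise distinct entries exactly when $i_1,\dots,i_{k-1}$ are distinct. Hence $\mathcal{A}_1',\dots,\mathcal{A}_{k-1}'$ are $k-1=2s$ families over $[n]$ on an index set of size $m-1$ satisfying the hypothesis of the theorem. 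As $\lfloor (k-1)/2\rfloor=s=\lfloor k/2\rfloor$, it therefore suffices to treat the case $k=2s$ even.

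\textbf{The construction.} Assume $k=2s$ and (discarding trivial cases) $m\ge 2s$. Let $v_{j,i}\in\mathbb{F}_2^n$ be the characteristic vector of $A_{j,i}$, and write $\odot$ for the coordinatewise product; then for all $i_1,\dots,i_s,j_1,\dots,j_s\in[m]$,
\[
\big\langle v_{1,i_1}\odot\cdots\odot v_{s,i_s},\ v_{s+1,j_1}\odot\cdots\odot v_{2s,j_s}\big\rangle_{\mathbb{F}_2}=\Big|\bigcap_{\ell=1}^{s}A_{\ell,i_\ell}\cap\bigcap_{\ell=1}^{s}A_{s+\ell,j_\ell}\Big|\bmod 2 .
\]
Choose $s$ pairwise disjoint blocks $B_1,\dots,B_s\subseteq[m]$, each of even size $b=2\lfloor m/(2s)\rfloor=\Theta(m)$, and let $\mathcal{T}=B_1\times\cdots\times B_s$. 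For each transversal tuple $I=(i_1,\dots,i_s)\in\mathcal{T}$ put $u_I=v_{1,i_1}\odot\cdots\odot v_{s,i_s}$ and $w_I=v_{s+1,i_1}\odot\cdots\odot v_{2s,i_s}$ in $\mathbb{F}_2^n$. For $I,J\in\mathcal{T}$, the index tuple appearing above consists of the pairwise distinct $i_\ell$'s together with the pairwise distinct $j_\ell$'s, and (since $i_\ell\in B_\ell$, $j_{\ell'}\in B_{\ell'}$, and distinct blocks are disjoint) it has a repeated entry precisely when $i_\ell=j_\ell$ for some $\ell$. By the hypothesis,
\[
M_{I,J}:=\langle u_I,w_J\rangle_{\mathbb{F}_2}=\big[\,\exists\,\ell:\ i_\ell=j_\ell\,\big].
\]

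\textbf{Rank bound and conclusion.} Let $N$ be the $\mathcal{T}\times\mathcal{T}$ matrix with $N_{I,J}=\prod_{\ell=1}^{s}[\,i_\ell\ne j_\ell\,]$, so that $M=J_{|\mathcal{T}|}+N$ over $\mathbb{F}_2$. Then $N=\bigotimes_{\ell=1}^{s}(J_b-I_b)$ is a Kronecker product, where $J_b-I_b$ is the adjacency matrix of $K_b$; since $b$ is even, $\det(J_b-I_b)=\pm(b-1)$ is odd, so each factor is invertible over $\mathbb{F}_2$ and hence so is $N$. Consequently $\operatorname{rank}_{\mathbb{F}_2}M\ge\operatorname{rank}_{\mathbb{F}_2}N-1=|\mathcal{T}|-1$. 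On the other hand, letting $U,W$ be the $n\times|\mathcal{T}|$ matrices with columns $u_I,w_I$, we have $M=U^{\top}W$, whence $|\mathcal{T}|-1\le\operatorname{rank}_{\mathbb{F}_2}M\le\operatorname{rank}_{\mathbb{F}_2}U\le n$. Thus $b^s=|\mathcal{T}|\le n+1$, and since $b=\Theta(m)$ with $s=\lfloor k/2\rfloor$ a constant, $m=O(n^{1/\lfloor k/2\rfloor})$.

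\textbf{Main obstacle.} The essential point is to restrict to transversal $s$-tuples rather than to all $s$-subsets of $[m]$: the latter would produce the ``intersecting'' matrix on $\binom{[m]}{s}$, whose $\mathbb{F}_2$-rank is genuinely delicate and can be far below $\binom{m}{s}$ for structured $m$ (Kneser-type mod-$2$ phenomena). The transversal grid replaces this by a Kronecker product of $K_b$-adjacency matrices, whose invertibility over $\mathbb{F}_2$ is a one-line parity check, at the harmless cost of a constant factor in the base of the exponent. The remaining ingredients — the odd-to-even reduction and $\operatorname{rank}(U^{\top}W)\le\operatorname{rank}U$ — are routine.
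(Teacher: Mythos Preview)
Your proof is correct and takes a genuinely different route from the paper's. The paper passes through the dual hypergraph-covering formulation: it shows that a modulo~$2$ cover of $H_{2k,2k}(m)$ induces a modulo~$2$ biclique cover of the ordered Kneser graph $OK_{m:k}$, and then bounds $f_2'(OK_{m:k})$ from below by $\tfrac{1}{2}\operatorname{rank}_{\mathbb{F}_2}A(K_{m:k})$, invoking Wilson's theorem on $p$-ranks of inclusion matrices to get $\operatorname{rank}_{\mathbb{F}_2}A(K_{m:k})\ge\binom{m}{k}-\binom{m}{k-4}$ along a suitable arithmetic progression of $m$. Your argument bypasses both the covering duality and Wilson's theorem entirely: by indexing with transversals of $s$ disjoint even-sized blocks rather than with $s$-subsets, the Gram matrix becomes (up to an all-ones perturbation) the Kronecker product $\bigotimes_\ell(J_b-I_b)$, whose invertibility over $\mathbb{F}_2$ is the single parity check $\det(J_b-I_b)=\pm(b-1)$ odd. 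This is strictly more elementary and self-contained, at the cost of a somewhat worse implicit constant (your bound gives roughly $m\le s\,n^{1/s}$, whereas the Kneser-rank route gives the sharper constant recorded after the paper's proof). Your closing remark already identifies exactly this trade-off: the $\binom{[m]}{s}$ indexing leads to the delicate Kneser $\mathbb{F}_2$-rank, while the product indexing tensorises cleanly.
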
 

By employing a connection to a hypergraph covering problem and a hypergraph extension (see Alon \cite{ALON2}, Cioab\u{a}-K{\"u}ndgen-Verstra\"{e}te \cite{CKV}, and Leader-Mili\'{c}evi\'{c}-Tan \cite{LMT}) of the Graham-Pollak Theorem \cite{GP}, we also show that Theorem \ref{thm:main1} is best possible. Moreover, we prove the following:

\begin{thm}\label{thm:main2}
Let $t,k$ be integers with $t \geq 2$ and $2t-2 \leq k$. If $(\mathcal{A}_1,\mathcal{A}_2,\dots,\mathcal{A}_k)$ are set families of an $n$ element set with $\mathcal{A}_{j} = \{A_{j,i} : 1 \leq i \leq m\}$ where $|\bigcap_{j = 1}^k A_{j,i_j}|$ is even if and only if at least $t$ of the $i_j$ are distinct, then $m = O(n^{1/(t-1)})$. 
\end{thm}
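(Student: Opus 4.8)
The plan is to use the standard linear-algebra (polynomial) method over $\mathbb{F}_2$, adapting the tensor-power trick that underlies the proof of Theorem \ref{thm:main1}. To each index $i\in[m]$ associate the characteristic vector $v_{j,i}\in\mathbb{F}_2^n$ of $A_{j,i}$, and note that $|\bigcap_{j=1}^k A_{j,i_j}|\bmod 2$ equals the multilinear product $\sum_{x\in[n]}\prod_{j=1}^k v_{j,i_j}(x)$, i.e. a degree-$k$ evaluation. The hypothesis says this quantity vanishes precisely when the tuple $(i_1,\dots,i_k)$ uses at least $t$ distinct values. The idea is to build, for each $i\in[m]$, a polynomial $P_i$ (in a suitable set of $\mathbb{F}_2$-variables of dimension polynomial in $n$) such that $P_i$ evaluated at ``point $i$'' is nonzero while $P_i$ evaluated at ``point $i'$'' is zero for all $i'\neq i$; linear independence of the $P_i$ then bounds $m$ by the dimension of the ambient polynomial space, which will be $O(n^{1/(t-1)})^{\,t-1}=O(n)$-ish — here one wants $m^{t-1}=O(n)$, giving $m=O(n^{1/(t-1)})$.

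Concretely, I would fix a reference value, say repeat the index $i$ in $k-(t-1)$ of the coordinates and let the remaining $t-1$ coordinates range over $i_1,\dots,i_{t-1}$; the condition $2t-2\le k$ is exactly what guarantees $k-(t-1)\ge t-1$, so that plugging in $t-1$ distinct free indices together with the repeated block still allows the tuple to have fewer than $t$ distinct entries when the free indices coincide with $i$, and to have at least $t$ distinct entries once they are all distinct from $i$ and from each other. This is the combinatorial heart of the argument and the place where the bound $2t-2\le k$ is used; getting the counting of ``distinct entries'' exactly right in the boundary cases is the step I expect to be the main obstacle. Having fixed the repeated block, the function $(i_1,\dots,i_{t-1})\mapsto |A_{1,i}\cap\cdots\cap A_{(k-t+1),i}\cap A_{(k-t+2),i_1}\cap\cdots\cap A_{k,i_{t-1}}| \bmod 2$ is a symmetric-type form that vanishes iff $\{i_1,\dots,i_{t-1},i\}$ has at least $t$ elements, i.e. iff $i_1,\dots,i_{t-1}$ are all distinct and all different from $i$.

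From here the argument is the familiar one: define the vector $w_i\in\mathbb{F}_2^{n}$ (or rather a tensor in $(\mathbb{F}_2^n)^{\otimes(t-1)}$ coming from the product $v_{k-t+2,\,i}\otimes\cdots$ — more precisely, the multilinear form above is a bilinear/$(t-1)$-linear pairing), and consider the polynomials $Q_i(y_1,\dots,y_{t-1})$ obtained by substituting the $v$'s for fixed repeated block and leaving the free slots as formal vectors; evaluating $Q_i$ at $(v_{*,i'},\dots,v_{*,i'})$ is nonzero for $i'=i$ and, by an inclusion–exclusion over which of the free indices equal $i'$, can be corrected to be zero for $i'\neq i$. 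One then checks these corrected polynomials are linearly independent in the space of multilinear polynomials of degree $\le t-1$ in $n$ variables — wait, that space has dimension $\sim n^{t-1}$, which would only give $m=O(n^{t-1})$, the wrong direction — so instead one argues as in Theorem \ref{thm:main1}: the relevant tensors live in an $\binom{n+t-2}{t-1}=O(n^{t-1})$-dimensional space but the nondegeneracy forces $m\le$ (that dimension)$^{1/(t-1)}$ via a Cauchy–Schwarz / rank-counting step, or one directly exhibits $m^{t-1}$ linearly independent elements in an $O(n)$-dimensional space. I would follow whichever of these two bookkeeping routes matches the proof of Theorem \ref{thm:main1}, since $t=k$ there is the special case $t-1=\lfloor k/2\rfloor$ only when $k$ is handled separately — so in the write-up I would first prove a clean ``skew'' statement for $t-1$ linear forms and then iterate/tensor it $t-1$ times, exactly paralleling how Theorem \ref{thm:skewoddtown} generalizes, with the reduction to $2(t-1)\le k$ slots doing all the work.
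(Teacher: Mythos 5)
Your proposal contains the right idea in one throwaway clause --- ``one directly exhibits $m^{t-1}$ linearly independent elements in an $O(n)$-dimensional space'' --- but the construction you actually commit to cannot produce those elements, and this is a genuine gap rather than a bookkeeping issue. You fix a single reference index $i$, repeat it in $k-t+1$ coordinates, and let $t-1$ coordinates vary; this yields one object $Q_i$ per index $i\in[m]$, so any linear-independence argument built on the $Q_i$ gives at best $m\leq\dim$. Worse, the diagonal evaluation you propose does not separate indices: for $t\geq 3$ the tuple $(i,\dots,i,i',\dots,i')$ has only two distinct entries, hence fewer than $t$, so the corresponding intersection is odd for \emph{every} $i'$, not just $i'=i$. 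The ``inclusion--exclusion correction'' you invoke is left unspecified and would have to carry the entire proof. You also notice mid-argument that your ambient space has dimension about $n^{t-1}$, which bounds $m$ in the wrong direction, and you resolve this only by deferring to ``whichever bookkeeping route matches the paper.''

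The missing idea is to index the new objects by $(t-1)$-\emph{subsets} of $[m]$ and to use $2t-2\leq k$ to carve out \emph{two disjoint blocks} of $t-1$ varying coordinates, rather than one repeated block of size at least $t-1$. For $I=\{i_1<\dots<i_{t-1}\}\in\binom{[m]}{t-1}$ set $A_I=A_{1,i_1}\cap\dots\cap A_{t-1,i_{t-1}}$ and $B_I=A_{t,i_1}\cap\dots\cap A_{2t-2,i_{t-1}}$, padding the remaining $k-2t+2$ coordinates with repeated indices from $I$ so that the $k$-wise hypothesis applies. The number of distinct indices occurring in $A_I\cap B_J$ is then $|I\cup J|$, which is at least $t$ exactly when $I\neq J$; hence $|A_I\cap B_J|$ is odd if and only if $I=J$, and $(\{A_I\},\{B_I\})$ is a skew oddtown pair on $[n]$. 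Theorem~\ref{thm:skewoddtown} (or \eqref{eq:pboundssetpairs} with $p=2$) gives $\binom{m}{t-1}\leq n+1$, i.e.\ $m\leq (t-1+o(1))\,n^{1/(t-1)}$. This is Lemma~\ref{lemma:middleupperbound} of the paper; a variant of your fixed-reference-block construction is what the paper uses only in the complementary regime $2t-2>k$, where it yields the weaker exponent $1/(k-t+1)$.
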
 

Theorem \ref{thm:main2} is seen to be best possible by again using a connection to a hypergraph covering problem. In the case where $\A = \A_1 = \cdots = \A_k$, we recover a variation of the classical oddtown problem. For integers $t< k$, a  family $\A \subseteq 2^{[n]}$ is said to follow \textit{$(k,t)$-oddtown rules} if the intersection of any $d$ distinct sets in $\A$ is odd if $d<t$ and even if $t \leq d \leq k$. 

\begin{cor}\label{cor:oddtownvariation}
Let $t,k \in \N$ be so that $2 \leq t \leq k$ and $2t-2 \leq k$. If $\A \subset 2^{[n]}$ follows $(k,t)$-oddtown rules, then $|\A| = O( n^{1/(t-1)})$. 
\end{cor}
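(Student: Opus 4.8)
The plan is to derive this as an immediate specialization of Theorem~\ref{thm:main2}, obtained by collapsing all $k$ families onto a single copy of $\A$. Write $\A = \{A_1,\dots,A_m\}$, listing the distinct members of $\A$ so that $m = |\A|$, and set $\A_j := \A$ for every $j \in [k]$, i.e. put $A_{j,i} := A_i$. With this choice, for any $i_1,\dots,i_k \in [m]$ we have $\bigcap_{j=1}^k A_{j,i_j} = \bigcap_{\ell \in S} A_\ell$, where $S = \{i_1,\dots,i_k\}$ is the set of indices appearing among $i_1,\dots,i_k$; in particular $|S| = d$ exactly when precisely $d$ of the $i_j$ are distinct, and these $d$ indices pick out $d$ distinct sets of $\A$ since we listed $\A$ without repetition.

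First I would observe that the $(k,t)$-oddtown condition — that $|\bigcap_{\ell \in S} A_\ell|$ is odd when $1 \le |S| < t$ and even when $t \le |S| \le k$ — is, under the translation above, exactly the hypothesis of Theorem~\ref{thm:main2}: $|\bigcap_{j=1}^k A_{j,i_j}|$ is even if and only if at least $t$ of the indices $i_j$ are distinct. Next I would check that the numerical hypotheses match: the corollary assumes $t \ge 2$ and $2t-2 \le k$, which are precisely the hypotheses of Theorem~\ref{thm:main2} (the additional assumption $t \le k$ only ensures the $(k,t)$-oddtown rules are not vacuous). Applying Theorem~\ref{thm:main2} to $(\A_1,\dots,\A_k) = (\A,\dots,\A)$ then gives $m = O(n^{1/(t-1)})$, that is, $|\A| = O(n^{1/(t-1)})$.

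I do not expect any real obstacle here: the entire difficulty is contained in Theorem~\ref{thm:main2}, and the corollary is pure bookkeeping. The one point I would state carefully is the correspondence between a repeated index among $i_1,\dots,i_k$ and an intersection of fewer distinct sets of $\A$, since this is what converts the ``number of distinct sets $d$'' phrasing of $(k,t)$-oddtown rules into the ``number of distinct indices $i_j$'' phrasing of Theorem~\ref{thm:main2}.
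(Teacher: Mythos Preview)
Your proposal is correct and is exactly the argument the paper has in mind: the text preceding the corollary already states that setting $\A = \A_1 = \cdots = \A_k$ in Theorem~\ref{thm:main2} recovers this oddtown variant, and you have simply written out the translation between the two hypotheses carefully. There is nothing to add.
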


Corollary \ref{cor:oddtownvariation} is best possible by considering $ \A_i = \{ A \in \binom{[n]}{t-1} : i \in A \} $ for $i \in [n]$ for $n$ so that the binomial coefficients corresponding to the size of the $d$-wise intersections are odd for $d<t$. The first open case is $(3,3)$-oddtown for which the above construction has size $\Theta(n^{1/2})$. If $\A =
\{ A_1, \ldots, A_m \}$ satisfies $(3,3)$-oddtown rules, then $\A' = \{ A_1 \cap A_i : 2 \leq i \leq m \}$ satisfies oddtown rules and thus $|\A| \leq n+1$. This is unlikely to be best possible as this holds replacing $A_1$ with \textit{any} $A_j \in \A$.

When $t<k$ and $2t-2>k$, we are able to show that $m = O(n^{1/(k-t+1)})$ (see Section \ref{sec:proofofmainthm2}), and conjecture that a stronger bound holds:   

\begin{conj}\label{conj:larget}
Let $t,k$ be integers with $t \geq 2$ and $2t-2 > k$. If $(\mathcal{A}_1,\mathcal{A}_2,\dots,\mathcal{A}_k)$ are set families of an $n$ element set with $\mathcal{A}_{j} = \{A_{j,i} : 1 \leq i \leq m\}$ where $|\bigcap_{j = 1}^k A_{j,i_j}|$ is even if and only if at least $t$ of the $i_j$ are distinct, then $m = O(n^{ 1/ \lfloor k/2 \rfloor})$.
\end{conj}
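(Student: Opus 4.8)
The plan is to run an $\mathbb{F}_2$ linear-algebra argument in the spirit of Theorem~\ref{thm:skewoddtown} — of the kind that must underlie Theorems~\ref{thm:main1} and~\ref{thm:main2} — and to push the exponent $\lfloor k/2\rfloor$ of Theorem~\ref{thm:main1} (its $t=k$ case) down to all admissible $(k,t)$; by that theorem we may assume $t<k$. Set $\ell=\lfloor k/2\rfloor$ and split the $k$ families into a first block of $\ell$ families and a second block of $k-\ell$ families. For $S=\{s_1<\dots<s_\ell\}\in\binom{[m]}{\ell}$ let $\phi_S\in\mathbb{F}_2^n$ be the indicator vector of $\bigcap_{j=1}^{\ell}A_{j,s_j}$ — the entrywise product of the indicator vectors of $A_{1,s_1},\dots,A_{\ell,s_\ell}$ — and for $T\in\binom{[m]}{\ell}$ let $\psi_T$ be built the same way from the second block, where if $k-\ell>\ell$ the extra families simply repeat indices already used, so the second block still contributes only the distinct values occurring in $T$. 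Writing $\Phi,\Psi$ for the $n\times\binom m\ell$ matrices over $\mathbb{F}_2$ with columns $\phi_S,\psi_T$, the hypothesis on the families says exactly
\[ M[S,T]\;:=\;\phi_S^{\top}\psi_T\;=\;\mathbf{1}\!\left[\,|S\cup T|\le t-1\,\right]\;=\;\mathbf{1}\!\left[\,|S\cap T|\ge 2\ell-t+1\,\right], \]
so $M=\sum_{i=2\ell-t+1}^{\ell}A_i$ is a sum of adjacency matrices of the Johnson scheme $J(m,\ell)$, with $A_\ell=I$. Since $M=\Phi^{\top}\Psi$ over $\mathbb{F}_2$ we get $\operatorname{rank}_{\mathbb{F}_2}(M)\le\operatorname{rank}_{\mathbb{F}_2}(\Phi)\le n$. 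The assumption $2t-2>k$ forces $\ell\le t-2$, so $M$ is $I$ plus a \emph{nonzero} combination of lower Johnson terms; this is exactly where the difficulty lies, since in the $t=k$, $k$ even case one instead gets the much simpler $M=J-I$, of $2$-rank at least $\binom m\ell-1$.

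The heart of a proof would then be to establish
\[ \operatorname{rank}_{\mathbb{F}_2}(M)\;=\;\Omega_k\!\left(\binom m\ell\right), \]
for then $\binom m\ell=O(n)$, whence $m=O(n^{1/\ell})=O(n^{1/\lfloor k/2\rfloor})$ as conjectured. I would attack this with the standard machinery for $2$-ranks of set-inclusion and set-intersection matrices (Wilson's diagonal-form rank formula and the results of Frankl and of Linial--Rothschild on inclusion matrices and the Johnson scheme): decompose $M$ against the levels $W_u[U,S]=\mathbf{1}[U\subseteq S]$, $|U|=u\le\ell$, and show that the top level $u=\ell$ — which carries a block of dimension $\binom m\ell-\binom m{\ell-1}=\Theta(m^\ell)$ — is not annihilated modulo $2$. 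Concretely this reduces to checking that a particular alternating sum of binomial coefficients, read off from the Eberlein polynomials of $J(m,\ell)$ evaluated on the top eigenspace, is odd.

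That last point is the main obstacle, and it is a genuine one: over $\mathbb{F}_2$ a combination of Johnson-scheme matrices can collapse completely — already $\sum_{i=0}^{\ell}A_i=J$ has rank $1$ — so there is no formal reason why $\sum_{i=2\ell-t+1}^{\ell}A_i$ must have $2$-rank of order $\binom m\ell$ for \emph{every} admissible $(k,t)$. When the naive $M$ degenerates, the natural remedies are to exploit the slack in the construction — which indices are repeated as padding, or restricting $S$ and $T$ to a sub-family of $\binom{[m]}{\ell}$ such as all $\ell$-sets containing (or all avoiding) a fixed small set — so as to replace $M$ by a matrix whose $2$-rank can be computed, and then to combine several such restrictions. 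The bound $m=O(n^{1/(k-t+1)})$ recorded after Corollary~\ref{cor:oddtownvariation} is precisely the crudest such move: restricting to a Ray-Chaudhuri--Wilson-type packing of $\ell$-sets with pairwise intersections at most $2\ell-t$, on which $M$ is forced to equal the identity. Improving the exponent from $k-t+1$ to $\lfloor k/2\rfloor$ therefore requires using, rather than destroying, the off-diagonal structure of $M$, and I expect that is where a proof of the conjecture must supply a new idea; a matching lower bound should then follow from the same hypergraph construction already used for Theorem~\ref{thm:main1}.
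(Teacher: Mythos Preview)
This statement is a \emph{conjecture} in the paper; the paper does not prove it. What the paper supplies are partial results: the case $t=k$ is Theorem~\ref{thm:main1}; the concluding remarks dispose of $(k,t)=(5,4)$ by an ad hoc reduction $b_{5,4}(n)-1\le b_{4,3}(n)$; Lemma~\ref{lemma:middleupperbound} gives only the weaker $m=O(n^{1/(k-t+1)})$ in general; and the paper explicitly names $(k,t)=(6,5)$ as the first open instance. So there is no proof in the paper for you to be compared against, and you are not claiming one either --- you explicitly flag the rank lower bound as the place ``where a proof of the conjecture must supply a new idea.''

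Your reduction is set up correctly: with $\ell=\lfloor k/2\rfloor$ the parity hypothesis does give $M[S,T]=\mathbf 1[\,|S\cup T|\le t-1\,]$, hence $M=\sum_{i=2\ell-t+1}^{\ell}A_i$ in the Johnson scheme, and $\operatorname{rank}_{\mathbb F_2}M\le n$. The missing step is exactly the one you isolate, namely $\operatorname{rank}_{\mathbb F_2}M=\Omega_k\!\bigl(\binom m\ell\bigr)$; already for $(k,t)=(6,5)$ this asks for the $\mathbb F_2$-rank of $A_2+I$ on $\binom{[m]}{3}$, which Wilson's diagonal form does not hand you directly. One correction to a side remark: in the $t=k$, $k$ even case your construction gives $M=J-A_0$ (the complement of the Kneser adjacency), not $J-I$; that is precisely why the paper's proof of Theorem~\ref{thm:main1} has to invoke Wilson's computation of $\operatorname{rank}_{\mathbb F_2}A(K_{n:k})$ rather than a triviality, and it is a warning that even the ``easy'' endpoint of your family of matrices already needs real input.
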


The bound $m = O(n^{ 1/ \lfloor k/2 \rfloor})$ comes from a connection to a hypergraph covering problem (see Section \ref{subsec:hgp}) and would imply that Conjecture \ref{conj:larget} if true is tight. 

Many results in extremal set theory regarding even/odd sized intersections have been extended to arbitrary primes $p$. Given a prime $p$ and a set $L \subset [0,p-1]$, $\A \subseteq 2^{[n]}$ is a \textit{$(p,L)$-intersecting} set system if for all $A \in \A$, $|A| \notin L \pmod{p}$, but for all distinct $A_1, A_2 \in \A$, $|A_1 \cap A_2| \in L \pmod{p}$. 
For more on $(p,L)$-intersecting set systems, see Deza-Frankl-Singhi \cite{DFS} as well as Frankl and Tokushige \cite{FT}. For $k \geq 3$, Szab{\' o} and Vu \cite{SV2} consider $k$-wise oddtown problems wherein the size of the sets are odd and the sizes of intersections of $k$ distinct sets are even. Grolmusz and Sudakov \cite{FS} study $k$-wise $(p,L)$-intersecting set systems where the size of distinct $k$-wise intersections are in $L \pmod{p}$. Similarly, we propose the following extension to Theorem \ref{thm:main1} for primes $p>2$. 

\begin{conj}\label{conj:main}
Let $p$ be prime and $(\mathcal{A}_1,\mathcal{A}_2,\dots,\mathcal{A}_k)$ be set families of an $n$ element set with $\mathcal{A}_{j} = \{A_{j,i} : 1 \leq i \leq m\}$ where
$|\bigcap_{j = 1}^k A_{j,i_j}|$ is nonzero modulo $p$ if and only if $i_1, \ldots, i_k \in [m]$ are all distinct. Then $m =  O(n^{(p-1) / \lfloor k/2 \rfloor})$.
\end{conj}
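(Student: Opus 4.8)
The plan is to combine the argument behind Theorem~\ref{thm:main1} with the polynomial/linear‑algebra device that is standard for passing from the prime $2$ to a general prime $p$ in results on $(p,L)$‑intersecting systems (Deza--Frankl--Singhi~\cite{DFS}, and the $k$‑wise variants of Grolmusz--Sudakov~\cite{FS}). First I would work over $\mathbb{F}_p$: let $v_{j,i}\in\mathbb{F}_p^{\,n}$ be the characteristic vector of $A_{j,i}$ and set $T(i_1,\dots,i_k)=\sum_{x\in[n]}\prod_{j=1}^{k}(v_{j,i_j})_x$, so that $T(i_1,\dots,i_k)\equiv|\bigcap_{j}A_{j,i_j}|\pmod p$. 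Since for $z\in\mathbb{F}_p$ one has $z^{p-1}\in\{0,1\}$, equal to $1$ exactly when $z\neq 0$, the hypothesis is precisely the statement that $T(i_1,\dots,i_k)^{p-1}$ equals $1$ when $i_1,\dots,i_k$ are all distinct and $0$ otherwise (in particular $0$ whenever all the $i_j$ coincide).

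Next I would expand the $(p-1)$st power. Putting $V_{j,i}:=v_{j,i}^{\otimes(p-1)}\in\mathbb{F}_p^{\,n^{p-1}}$, the characteristic vector of the product set $A_{j,i}^{\,p-1}\subseteq[n]^{p-1}$, one gets
\[
T(i_1,\dots,i_k)^{p-1}=\sum_{\vec x\in[n]^{p-1}}\ \prod_{j=1}^{k}(V_{j,i_j})_{\vec x},
\]
so on a ground set of size $N=n^{p-1}$ we obtain characteristic vectors whose $k$‑wise ``product--sum'' equals the indicator that $i_1,\dots,i_k$ are distinct --- the complement of the condition in Theorem~\ref{thm:main1}. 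One can either run the argument behind Theorem~\ref{thm:main1} against this complementary condition directly, or flip it: append one coordinate to every vector, with all new entries equal to $1$, after rescaling each $V_{j,i}$ by a scalar $c$ with $c^{k}=-1$ (passing to an extension field $\mathbb{F}_{p^{r}}$ if necessary, which does not affect that the ground set has size $O(n^{p-1})$); the resulting vectors satisfy exactly the hypothesis of Theorem~\ref{thm:main1} over that field. Either way, Conjecture~\ref{conj:main} would follow from a version of (the proof of) Theorem~\ref{thm:main1} valid over an arbitrary field, applied with $N+1=O(n^{p-1})$ coordinates, which outputs $m=O\big((N+1)^{1/\lfloor k/2\rfloor}\big)=O\big(n^{(p-1)/\lfloor k/2\rfloor}\big)$. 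Tightness should then follow by inflating the extremal construction behind Theorem~\ref{thm:main1} by a $(p-1)$‑fold product gadget, in the spirit of the construction given for Corollary~\ref{cor:oddtownvariation}; here one uses that every multinomial coefficient $\binom{p-1}{a_1,\dots,a_r}$ (with $a_1+\cdots+a_r=p-1$) is nonzero modulo $p$ by Wilson's theorem, which is exactly what makes the $(p-1)$st‑power embedding lossless.

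The main obstacle is the clause ``valid over an arbitrary field.'' The proof of Theorem~\ref{thm:main1} goes through a hypergraph covering reformulation together with a Graham--Pollak‑type lower bound on the number of complete $k$‑partite $k$‑graphs in a partition of $K_{m}^{(k)}$, and the known proofs of such lower bounds use real eigenvalue counts (Sylvester's law of inertia) or identities over $\Q$, neither of which survives reduction modulo $p$ verbatim. So the real work is to produce a modulo‑$p$ robust form of that lower bound --- say a direct estimate of the $\mathbb{F}_p$‑rank of the relevant disjointness/inclusion matrix, or a skew (triangular) refinement of the hypothesis that uses only that certain diagonal quantities are units in $\mathbb{F}_p$ --- while tracking the rank drops that occur for $m$ in some residue classes mod $p$ (harmless for an $O(\cdot)$ bound but real: already for $k=2$ the relevant matrix is $J-I$, of rank $m-1$ rather than $m$ when $m\equiv1\pmod p$). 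A minor secondary point: the reduction above is faithful only because the hypothesis is a genuine ``if and only if,'' so both directions must be checked --- distinct index tuples give a truly nonzero value, and every degenerate tuple gives $0$.
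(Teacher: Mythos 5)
First, a point of context: the statement you are proving is posed in the paper as an \emph{open conjecture}. The authors verify it only for $k=2$ via \eqref{eq:pboundssetpairs}, and in the concluding remarks they reduce the general case to an unsolved Problem (an $\Omega(n^{k/(p-1)})$ lower bound on the $\mathbb{F}_p$-rank of inclusion-type matrices whose nonzero entries are \emph{arbitrary} elements of $\mathbb{F}_p^\times$). So there is no proof in the paper to compare against, and your proposal as written does not close the conjecture either: you yourself flag that everything hinges on a Graham--Pollak-type lower bound ``valid over an arbitrary field'' and you leave that step unproved. That is a genuine gap --- without it your argument produces no bound at all.

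That said, you misdiagnose where the difficulty lies, and your own reduction is stronger than you give it credit for. The paper's proof of Theorem \ref{thm:main1} does not use the real-eigenvalue/Sylvester form of Graham--Pollak that you worry about; it reduces, via Lemma \ref{lemma:reductiontokneser} and rank subadditivity of bicliques, to the rank of the inclusion matrix $M_{n,k,n-k}=A(K_{n:k})$, which Wilson's formula \eqref{eq:prankkneser} computes over $\mathbb{F}_p$ for \emph{every} prime; the $i=k$ term there is always present (since $p\nmid\binom{n-2k}{0}=1$), so $\mathrm{rank}_{\mathbb{F}_p}(A(K_{n:k}))\geq\binom{n}{k}-\binom{n}{k-1}$ for all $p$. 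The only reason this does not immediately give the conjecture is that, for a general mod-$p$ system, the covering multiplicities produce a matrix with arbitrary nonzero entries at incidences --- exactly the paper's open Problem. But your $(p-1)$-st tensor power removes precisely that issue: replacing $A_{j,i}$ by $A_{j,i}^{p-1}\subseteq[n]^{p-1}$ and using Fermat's little theorem makes every $k$-wise intersection size congruent to $1$ (indices all distinct) or $0$ (otherwise) modulo $p$, so the $N=n^{p-1}$ ground elements yield complete $k$-partite $k$-graphs whose summed biclique matrices are congruent mod $p$ to the genuine $0/1$ matrix $A(OK_{m:\lfloor k/2\rfloor})$, to which Wilson applies directly. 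Chasing this through (for even $k$, with odd $k$ handled by passing to links as in the paper) appears to give $n^{p-1}=\Omega\bigl(\binom{m}{\lfloor k/2\rfloor}\bigr)$, i.e.\ the conjectured bound --- no sign flip, no extension field, and no new rank theorem needed. So the concrete missing step in your writeup is not the ``arbitrary field'' lemma you ask for, but the observation that your normalization reduces the problem to \eqref{eq:prankkneser} as already quoted in the paper; until that step is written out and verified, the proposal remains a sketch whose central lemma is asserted rather than proved, and the asides about $c^k=-1$, extension fields, and the rank drop of $J-I$ are detours that do not bear on it.
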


F{\" u}redi and Sudakov \cite[Lemma 3.1]{FS} used multilinear polynomials to prove if $\A= \{ A_1, \ldots, A_m\}$ and $\B=\{B_1, \ldots, B_m\}$ are families of subsets of an $n$ element set where $|A_i \cap B_j|$ is nonzero modulo $p$ if and only if $i\neq j$, then 
\begin{equation}\label{eq:pboundssetpairs}
m \leq \sum_{i=0}^{p-1} \binom{n}{i}.     
\end{equation}

It follows that \eqref{eq:pboundssetpairs} is asymptotically best possible by taking $\A = \binom{[n]}{p-1}$ and $\B$ to be the corresponding complements. This verifies Conjecture \ref{conj:main} when $k=2$.

\textbf{Structure and Organization:}
Throughout the paper, we let $[n]:= \{1,2, \ldots, n \}$ and $\binom{[n]}{k}$ denote the complete $k$-uniform hypergraph, or $k$-graph. Let $K_{n:k}$ denote the \textit{Kneser graph} with $V(K_{n:k}) = \binom{[n]}{k}$ and $(A,B) \in E(K_{n:k})$ if and only if $A \cap B = \emptyset$. Given subsets $X_1, \ldots, X_k$ of $[n]$, let $\prod_{i=1}^k X_i$ denote the complete $k$-partite $k$-graph with edges consisting of exactly one vertex from each part $X_1, \ldots, X_k$. For $k=2$, we refer to complete bipartite graphs as \textit{bicliques}. Given a subset $A \subseteq [n]$, let $v_X \in \mathbb{F}_2^n$ denote its characteristic vector and $(v_X)_k$ denote its $k$th component. For a family $\A= \{A_1, \ldots , A_m\} \subseteq 2^{[n]}$, let $\A^c= \{A_1^c, \ldots, A_m^c \} \subseteq 2^{[n]}$ be the corresponding complements. We denote the adjacency matrix of a graph $G$ by $A(G)$. For functions $f,g :\mathbb N\rightarrow \mathbb R^+$, we write $f\sim g$ if $\lim_{n \rightarrow\infty} f(n)/g(n) = 1$, $f = o(g)$ if $\lim_{n \rightarrow \infty} f(n)/g(n) = 0$, and $f = O(g)$ and $g = \Omega(f)$ if there exists $c > 0$ such that $f(n)\leq cg(n)$ for all $n \in \mathbb N$. If $f = O(g)$ and $g = O(f)$, we write $f =\Theta(g)$.

The paper is organized as follows. In Section \ref{subsec:hgp}, we introduce a hypergraph covering problem which we will use to show Theorem \ref{thm:main1} and Theorem \ref{thm:main2} are best possible. We prove Theorem \ref{thm:main1} and Theorem \ref{thm:main2} in Section \ref{sec:proofofmainthm1} and Section \ref{sec:proofofmainthm2} respectively.

\section{Hypergraph Covering Problems}\label{subsec:hgp} 

In this section, we relate the problem in Theorem \ref{thm:main1} and Theorem \ref{thm:main2} to that of a covering problem of a particular hypergraph. We first define the corresponding collection of set families.

\begin{defn}\label{defn:bsktupmodp}
Let $\A_j = \{A_{j,i} : 1 \leq i \leq m  \}$ for $j \in [k]$. Then $(\A_1, \A_2, \ldots,  \A_k)$ forms a \textit{Bollob{\'a}s set $(k,t)$-tuple modulo $2$} if \[ | \bigcap\limits_{i=1}^k A_{j,i_j}| = 0 \pmod{2} \iff |\{  i_1, \ldots, i_k\}| <t .  \] 
Let $b_{k,t}(n)$ be the size of the largest Bollob{\'a}s set $(k,t)$-tuple modulo $2$ with ground set $[n]$.
\end{defn}

In the setting where $k=t=2$, we refer to a Bollob{\'a}s set $(2,2)$-tuple modulo $2$ as a \textit{Bollob{\'a}s set pair modulo $2$}. Using the language of Definition \ref{defn:bsktupmodp}, Theorem \ref{thm:main1} and Theorem \ref{thm:main2} say $b_{k,k}(n) = \Theta(n^{1/\lfloor k/2 \rfloor})$ and $b_{k,t}(n) = \Theta(n^{1/(t-1)})$ respectively as one may add an auxiliary element to each set in each family to interchange the parity of the sets and their corresponding intersections. In order to determine of the asymptotics of $b_{k,t}(n)$, we consider the following related hypergraph covering problem:

\begin{defn}\label{defn:mod2cover}
Let $H \subseteq \binom{[n]}{k}$. A \textit{modulo $2$} cover of $H$ is a collection of complete $k$-partite $k$-graphs which cover each edge of $H$ an odd number of times and each non-edge an even number of times. We denote the minimum size of a modulo $2$ cover of $H$ by $f'_k(H)$. 
\end{defn}
 
Setting $X_{i} = \{x_{i,j} : j \in [n]\}$ for $i \in [k]$, let $H_{k,t}(n)$ be the subhypergraph of $\prod_{i \in [k]} X_i$ consisting of hyperedges $\{x_{1,i_1},x_{2,i_2},\dots,x_{k,i_k}\}$ such that at least $t$ of the indices $i_1,i_2,\dots,i_k$ are distinct. Then there is a one to one correspondence between a modulo $2$ cover of $H_{k,t}(n)$ with $m$ complete $k$-partite $k$-graphs and a Bollob{\'a}s set $(k,t)$-tuple modulo $2$ consisting of subsets of $[m]$. Hence, 
\begin{equation}\label{eq:betamugen}
f_{k,t}(n):= f'_k(H_{k,t}(n)) = \min\{m : b_{k,t}(m) \geq n\}.
\end{equation}

The problem in Definition \ref{defn:mod2cover} is closely related to the hypergraph extension of the Graham-Pollak \cite{GP} problem. Given a hypergraph $H \subseteq \binom{[n]}{k}$, let $f_k(H)$ denote minimum number of complete $k$-partite $k$-graphs needed to cover each edge of $H$ exactly once and let $f_k(n) = f_k(\binom{[n]}{k})$. 

Graham and Pollak \cite{GP} proved for all $n \geq 1$, $f_2(n)=n-1$. Alon \cite{ALON2} proved that $f_3(n) =n-2$ for $n\geq 2$ and proved that $f_k(n)= \Theta(n^{\lfloor k/2 \rfloor})$. Relating the problem to that of a minimal biclique cover of the Kneser graph $K_{n:k}$,  Cioab\u{a} et al. \cite{CKV} improved the bounds on $f_k(n)$ to:

\begin{equation}\label{eq:hypgpbounds}
 \frac{2 \binom{n-1}{k}}{\binom{2k}{k}} \leq f_{2k}(n) \leq \binom{n-k}{k}.   
\end{equation}

Currently Leader, Mili\'{c}evi\'{c}, and Tan \cite{LMT} have the best general bounds. A key ingredient in our proof of Theorem \ref{thm:main1} is to similarly relate this hypergraph covering problem to a graph covering problem as in Cioab\u{a} et al. \cite{CKV}. To this end, define the \textit{ordered} Kneser graph $OK_{n:k}$ as follows:  
\begin{eqnarray*}
V(OK_{n:k}) &=& \{ ( i_1, i_2, \ldots, i_k) : \{i_1, \ldots, i_k\} \in \binom{[n]}{k} \} \\ 
E(OK_{n:k}) &=& \{ ((i_1, \ldots, i_k), (j_1, \ldots, j_k)) : \{i_1, \ldots, i_k\} \cap \{j_1, \ldots, j_k\} = \emptyset  \}. 
\end{eqnarray*}

The following lemma relates modulo $2$ covers of $H_{2k,2k}(n)$ and biclique covers of $OK_{n:k}$. 

\begin{lemma}\label{lemma:reductiontokneser}
Let $\{ \mathcal{C}_i: i \in [t] \}$ be a modulo $2$ cover of $H_{2k,2k}(n)$. Then, there exists a modulo $2$ biclique cover $\{ \mathcal{C}_i': i \in [t] \}$ of $OK_{n:k}$ and in particular $f_{2k,2k}(n) \geq f'_2(OK_{n:k})$.
\end{lemma}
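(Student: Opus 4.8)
The plan is to build the biclique cover $\{\mathcal{C}_i' : i \in [t]\}$ of $OK_{n:k}$ directly from the given modulo $2$ cover $\{\mathcal{C}_i : i \in [t]\}$ of $H_{2k,2k}(n)$ by a natural "folding" of the $2k$ coordinates into two blocks of $k$. Recall that a vertex of $OK_{n:k}$ is an ordered tuple $(i_1,\dots,i_k)$ with distinct entries, and two such vertices are adjacent iff the underlying $k$-sets are disjoint. On the other side, $H_{2k,2k}(n)$ lives inside $\prod_{\ell \in [2k]} X_\ell$ and its edges are the transversals $\{x_{1,i_1},\dots,x_{2k,i_{2k}}\}$ with all $2k$ indices distinct. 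So the combinatorial content is: a transversal with all $2k$ indices distinct corresponds to an ordered pair $\big((i_1,\dots,i_k),(i_{k+1},\dots,i_{2k})\big)$ of vertices of $OK_{n:k}$ whose underlying $k$-sets are disjoint — i.e. precisely an ordered edge of $OK_{n:k}$ — while transversals with a repeated index correspond to "degenerate" pairs that are non-edges of $OK_{n:k}$ (or not even pairs of valid vertices, if a repeat occurs within one block).

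The first step is to make this correspondence precise as a map on edge sets, and then to push each complete $2k$-partite $2k$-graph $\mathcal{C}_i = \prod_{\ell\in[2k]} S_\ell^{(i)}$ (with $S_\ell^{(i)} \subseteq X_\ell \cong [n]$) forward to a subgraph $\mathcal{C}_i'$ of $OK_{n:k}$: its first vertex-part should be the set of ordered tuples $(i_1,\dots,i_k)$ with distinct entries and $i_\ell \in S_\ell^{(i)}$ for $\ell\le k$, and its second part the analogous set of tuples drawn from $S_{k+1}^{(i)},\dots,S_{2k}^{(i)}$. The key observation is that although $\mathcal{C}_i'$ defined this way is generally *not* a biclique (the "distinct entries" constraint breaks the product structure), we only need a modulo $2$ biclique cover, so we may instead take $\mathcal{C}_i'$ to be a formal $\mathbb{F}_2$-sum of bicliques obtained from $\mathcal{C}_i$ by inclusion–exclusion over which coordinates collide; the point is that each genuine product $\prod_{\ell\le k} T_\ell \times \prod_{k<\ell\le 2k} T_\ell$ (a biclique on the vertex tuples, before imposing distinctness) *is* a biclique of the ambient $OK_{n:k}$-host, and the distinctness constraint and the disjointness-across-blocks constraint are exactly what is handled by parity bookkeeping inherited from the fact that $\mathcal{C}$ covers edges of $H_{2k,2k}(n)$ (all-distinct transversals) oddly and everything else evenly.

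The main steps in order: (1) set up the bijection between ordered edges of $OK_{n:k}$ and edges of $H_{2k,2k}(n)$, and between "pairs of multisets of size $k$" and arbitrary transversals of $\prod_{\ell\in[2k]}X_\ell$; (2) define the pushed-forward families $\mathcal{C}_i'$ and verify each is (a disjoint union / $\mathbb{F}_2$-combination bounded by a constant number of) bicliques — crucially this does not increase the count past $t$ in the relevant sense, since each $\mathcal{C}_i$ yields one "block" $\mathcal{C}_i'$; (3) check the parity condition: an ordered pair of $OK_{n:k}$-vertices is an edge iff the corresponding transversal has all $2k$ indices distinct, so "covered oddly" transfers directly; non-edges of $OK_{n:k}$ (including all-distinct-within-a-block-but-overlapping-across-blocks, and within-block collisions) map to transversals with a repeated index, hence are covered evenly; (4) conclude $f'_2(OK_{n:k}) \le f_{2k,2k}(n)$ by applying this to a minimum cover.

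The hard part will be step (2)–(3): reconciling the fact that the honest image of a complete $2k$-partite $2k$-graph under "collapse to two blocks of $k$ and demand distinct entries within each block" is not literally a biclique, with the requirement to produce an actual (modulo $2$) biclique cover. I expect the resolution is either (a) to observe that the distinctness-within-a-block constraint can be absorbed because tuples with a repeated entry are non-vertices of $OK_{n:k}$ and thus simply don't appear — so one restricts the biclique $\prod_{\ell\le k}S_\ell^{(i)} \times \prod_{k<\ell\le 2k}S_\ell^{(i)}$ to the induced subgraph on $V(OK_{n:k})$, which is still a biclique because $OK_{n:k}$'s vertex set is a product-like set minus degenerate tuples and the bipartition survives — or (b) to handle the ordering of coordinates *within* a block via a symmetrization/averaging argument over the $(k!)^2$ relabelings, which is clean over $\mathbb{F}_2$ only if $k!$ is odd, so more likely one sidesteps it entirely by keeping the tuples ordered throughout (which is exactly why the authors introduced the *ordered* Kneser graph). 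Getting the bookkeeping in step (3) exactly right — making sure that every non-edge type of $OK_{n:k}$ really does correspond to an even-covered configuration of $H_{2k,2k}(n)$, and that no edge is missed — is where the care is needed, but it is essentially a matching of definitions rather than a substantive difficulty.
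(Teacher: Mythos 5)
Your approach is essentially the paper's: the resolution you label (a) --- restricting each product $\prod_{j\le k}X_{i,j}$ and $\prod_{j=k+1}^{2k}X_{i,j}$ to $V(OK_{n:k})$ and taking the complete bipartite graph between the two resulting vertex sets --- is exactly the proof given in the paper, and the parity check in your step (3) (all-$2k$-indices-distinct transversals correspond bijectively to edges of $OK_{n:k}$, everything else to non-edges or non-vertices) is the only verification needed. The inclusion--exclusion/formal $\mathbb{F}_2$-sum machinery you contemplate is unnecessary, since a complete bipartite graph between any two subsets of $V(OK_{n:k})$ is already a legitimate biclique for the purposes of a modulo $2$ cover.
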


\begin{proof}
For each complete $2k$-partite $2k$-graph $\mathcal{C}_i := \prod_{j=1}^{2k} X_{i,j}$ in a modulo $2$ cover of $H_{2k,2k}(n)$, define the complete bipartite graph $\mathcal{C}_i'$ with parts 
$\prod_{j=1}^k X_{i,j} \cap V(OK_{n:k})$, and  $\prod_{j=k+1}^{2k} X_{i,j} \cap V(OK_{n:k})$.

Observe that $\{\mathcal{C}_i'\}$ is a modulo $2$ biclique cover of $OK_{n:k}$. 
\end{proof} 

Our next lemma shows a connection between the $k$-graphs $H_{k,k}(n)$ and $\binom{[n]}{k}$.

\begin{lemma}\label{lemma:gpupperbound}
Let $k \geq 2$. Then $f_k(H_{k,k}(n)) \leq k! f_k(n)$. 
\end{lemma}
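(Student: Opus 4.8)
The plan is to lift an optimal exact cover of $\binom{[n]}{k}$ to one of $H_{k,k}(n)$, paying a factor of $k!$ to account for the $k!$ ways of assigning the elements of a $k$-set to the ``colour classes'' $X_1,\dots,X_k$. The key observation is that an edge $\{x_{1,i_1},\dots,x_{k,i_k}\}$ of $H_{k,k}(n)$ is precisely the data of a $k$-set $S=\{i_1,\dots,i_k\}\in\binom{[n]}{k}$ together with the bijection $\ell\mapsto i_\ell$ from $[k]$ onto $S$.

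Concretely, I would fix an exact cover of $\binom{[n]}{k}$ by $N:=f_k(n)$ complete $k$-partite $k$-graphs $C^{(1)},\dots,C^{(N)}$, where $C^{(r)}=\prod_{\ell=1}^k Y^{(r)}_\ell$ for pairwise disjoint nonempty $Y^{(r)}_1,\dots,Y^{(r)}_k\subseteq[n]$. For each $r\in[N]$ and each permutation $\pi\in S_k$, define the complete $k$-partite $k$-graph $C^{(r,\pi)}\subseteq\prod_{i\in[k]}X_i$ whose $\ell$th part is $\{x_{\ell,j}:j\in Y^{(r)}_{\pi(\ell)}\}$. Since the $Y^{(r)}_a$ are disjoint and $\pi$ is a bijection, every transversal of $C^{(r,\pi)}$ uses $k$ distinct indices, so each $C^{(r,\pi)}$ is a subgraph of $H_{k,k}(n)$; there are exactly $k!\,N$ of them.

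It then remains to check that $\{C^{(r,\pi)}:r\in[N],\ \pi\in S_k\}$ covers each edge of $H_{k,k}(n)$ exactly once. Given such an edge $e$ with index set $S=\{i_1,\dots,i_k\}$, there is a unique $r$ with $S\in C^{(r)}$, and then (using $|S\cap Y^{(r)}_a|=1$ for each $a$) a unique $\rho\in S_k$ with $i_\ell\in Y^{(r)}_{\rho(\ell)}$ for all $\ell$. The point to verify is that $e\in C^{(r',\pi)}$ forces both $r'=r$ and $\pi=\rho$: membership means $i_\ell\in Y^{(r')}_{\pi(\ell)}$ for all $\ell$, whence $S$ meets every part of $C^{(r')}$ and so is a transversal of it, giving $r'=r$ by uniqueness, and then disjointness of the $Y^{(r)}_a$ forces $\pi(\ell)=\rho(\ell)$ for every $\ell$. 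Hence $e$ is covered exactly once, by $C^{(r,\rho)}$, and $f_k(H_{k,k}(n))\le k!\,f_k(n)$. I do not expect a genuine obstacle here beyond setting up this bijection-plus-permutation bookkeeping carefully; the only subtleties are the conventions ruling out degenerate (empty) parts in complete $k$-partite $k$-graphs and the fact that the $C^{(r)}$ form an \emph{exact} cover, which is exactly what lets us conclude that each edge $e$ is covered exactly once rather than merely an odd number of times.
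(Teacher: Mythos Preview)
Your proposal is correct and follows essentially the same construction as the paper: take an optimal exact decomposition $\{\mathcal{C}_i\}$ of $\binom{[n]}{k}$ into complete $k$-partite $k$-graphs, and for each $\mathcal{C}_i$ and each permutation $\pi\in S_k$ build the lifted $k$-partite $k$-graph in $\prod_\ell X_\ell$ by sending the $\pi(\ell)$th part of $\mathcal{C}_i$ into the colour class $X_\ell$. The paper states in one line that the resulting family $\{\mathcal{C}_{i,\pi}\}$ decomposes $H_{k,k}(n)$; you spell out the same verification in full, including the uniqueness of $(r,\pi)$ covering a given edge, which is exactly the content of the paper's claim.
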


\begin{proof}
Let $m= f_k(n)$ and $\{\mathcal{C}_i: i \in [m] \}$ be such a minimal $k$-partite, $k$-uniform cover of $\binom{[n]}{k}$. Then, given $\mathcal{C}_i= \prod_{i \in [k]} X_i$ and a bijection $\pi: [k] \to [k]$, define \[ \mathcal{C}_{i, \pi}= \prod_{i \in [k]} X_{\pi(i)} \subset H_{k,k}(n). \]
It then follows that $\{ \mathcal{C}_{i, \pi} \}$ forms a decomposition of $H_{k,k}(n)$ and thus implies the bound. 
\end{proof}

\section{Proof of Theorem \ref{thm:main1}}\label{sec:proofofmainthm1}

\subsection{The case $k=2$} In the setting where $t=k=2$, it is straightforward to show:

\begin{prop}\label{prop:pairsmod2} For all $n \geq 1$,

\begin{equation*}\label{eq:bspmod2}
f_{2,2}(n) = \begin{cases} 
      n & n \text{ is odd} \\
      n-1 & n \text{ is even} \\
     \end{cases} 
\end{equation*}

\end{prop}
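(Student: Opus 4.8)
The plan is to analyze the quantity $f_{2,2}(n) = f'_2(H_{2,2}(n))$ directly, where by \eqref{eq:betamugen} this equals $\min\{m : b_{2,2}(m) \geq n\}$, so equivalently we want to pin down $b_{2,2}(m)$, the largest Bollob\'as set pair modulo $2$ on ground set $[m]$. A Bollob\'as set pair modulo $2$ here is a pair of families $\mathcal{A} = \{A_1,\dots,A_\ell\}$, $\mathcal{B} = \{B_1,\dots,B_\ell\}$ of subsets of $[m]$ with $|A_i \cap B_j|$ odd iff $i = j$. Over $\mathbb{F}_2$ this says $\langle v_{A_i}, v_{B_j}\rangle = \delta_{ij}$, i.e. the matrix $M$ with $M_{ij} = \langle v_{A_i}, v_{B_j}\rangle$ is the identity. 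Since $M = U V^{\top}$ where $U$ has rows $v_{A_i}$ and $V$ has rows $v_{B_j}$, both living in $\mathbb{F}_2^m$, having $M = I_\ell$ forces $\ell \le m$; so $b_{2,2}(m) \le m$ always, giving $f_{2,2}(n) \ge n$ whenever such a system of size $n$ cannot be beaten — more precisely $f_{2,2}(n) \le m \iff b_{2,2}(m) \ge n$.

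The real content is the parity refinement. First I would establish the easy direction of each case by explicit construction: taking $A_i = B_i = \{i\}$ for $i \in [m]$ gives $|A_i \cap B_j| = \delta_{ij}$, a valid Bollob\'as set pair mod $2$ of size $m$, so $b_{2,2}(m) \ge m$ and hence $f_{2,2}(n) \le n$ for all $n$. For even $n$ I would exhibit a cover of $H_{2,2}(n)$ of size $n-1$; in covering language $H_{2,2}(n)$ is the complete bipartite graph $K_{n,n}$ minus a perfect matching (the non-edges are the pairs $\{x_{1,i}, x_{2,i}\}$), and one shows this can be decomposed modulo $2$ into $n-1$ complete bipartite graphs when $n$ is even — equivalently, construct a Bollob\'as set pair mod $2$ of size $n$ on ground set $[n-1]$. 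Concretely, for even $n$ one can take $A_i = \{i\}$, $B_i = \{i\}$ for $i \le n-1$ and $A_n = B_n = \{1,2,\dots,n-1\}$; then $|A_n \cap B_n| = n-1$ is odd since $n$ is even, $|A_i \cap B_n| = 1$ odd... wait, that fails the off-diagonal condition, so I would instead use $A_n = B_n = [n-1]$ only if $n-1$ is odd and patch the cross terms, or more robustly use the standard trick: over $\mathbb{F}_2$, find $\ell = n$ vectors $a_1,\dots,a_n, b_1,\dots,b_n \in \mathbb{F}_2^{n-1}$ with $\langle a_i,b_j\rangle = \delta_{ij}$, which exists iff the all-ones functional obstruction vanishes — this is where the parity of $n$ enters.

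The key obstruction, and the main step, is the lower bound $f_{2,2}(n) \ge n$ for odd $n$, equivalently $b_{2,2}(m) \le m-1$ for even $m$. Suppose $\mathcal{A}, \mathcal{B}$ on $[m]$ with $\langle v_{A_i}, v_{B_j}\rangle = \delta_{ij}$ and $m$ even; I want to show $\ell \le m-1$. Assume for contradiction $\ell = m$: then the $m \times m$ matrices $U$ (rows $v_{A_i}$) and $V$ (rows $v_{B_j}$) satisfy $UV^{\top} = I_m$ over $\mathbb{F}_2$, so $U$ is invertible and $V^{\top} = U^{-1}$, hence also $V^{\top}U = I_m$, i.e. $\sum_i (v_{A_i})_k (v_{B_i})_\ell = \delta_{k\ell}$. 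Summing the diagonal relations $\sum_k (v_{A_i})_k(v_{B_i})_k = 1$ over $i \in [m]$ gives $\sum_k \sum_i (v_{A_i})_k (v_{B_i})_k = m \equiv 0 \pmod 2$ since $m$ even, but the inner sum is $\sum_k 1 = m \equiv 0$ — so that particular count is consistent and I need a sharper invariant. The right move is: since $U$ is invertible over $\mathbb{F}_2$ we have $\det U = 1$; consider instead the quantity $\sum_{i} \langle v_{A_i}, v_{A_i}\rangle$ or play $U$ against $V$ via $\mathrm{tr}(UV^{\top}) = \mathrm{tr}(I_m) = m \equiv 0$, while $\mathrm{tr}(UV^{\top}) = \sum_{i,k}(v_{A_i})_k(v_{B_i})_k$, matching — again consistent. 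So the genuine argument must use the structure of $H_{2,2}(n)$, not just the rank bound: the non-edges being covered an \emph{even} number of times is the extra constraint. I expect the cleanest route is the polynomial/linear-algebra argument of Babai--Frankl adapted mod $2$ together with a parity count on the diagonal entries $|A_i|$, $|B_i|$: one shows $\sum_i |A_i \cap B_i| \equiv \binom{\ell}{2}$-type relations force $\ell \not\equiv$ something when the ground set has the wrong parity. I would set up the $\ell \times \ell$ identity over $\mathbb{F}_2$, append the constraint coming from the missing matching edges of $K_{n,n}$, and extract that $\ell \le n-1$ is impossible for the size-$n$ system precisely when $n$ is odd — this parity bookkeeping is the hard part and the place where all the case distinction lives.
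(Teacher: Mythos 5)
There is a genuine gap, and it starts with a misreading of Definition \ref{defn:bsktupmodp}: for a Bollob\'as set pair modulo $2$, $|A_i\cap B_j|$ is \emph{even} precisely when $i=j$ (the condition $|\{i_1,i_2\}|<2$) and \emph{odd} when $i\neq j$ --- the opposite of the convention you adopt. Consequently the Gram matrix $M=(\langle v_{A_i},v_{B_j}\rangle)$ you should be analysing is $J+I_\ell$ over $\mathbb{F}_2$, not $I_\ell$. This is not cosmetic. With $M=I_\ell$ the factorization $M=UV^{\top}$ gives $\ell\le m$ unconditionally, the diagonal family $A_i=B_i=\{i\}$ attains it, and --- as you notice yourself --- every trace identity you try comes out consistent, so no parity distinction can ever emerge from your setup. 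The entire content of the proposition lives in the fact that $J+I_\ell$ has $\mathbb{F}_2$-rank $\ell$ when $\ell$ is even and $\ell-1$ when $\ell$ is odd (so $\ell\le m$ unless $\ell$ is odd, in which case only $\ell\le m+1$ follows). The paper reaches the same dichotomy by a dependency argument: if $\sum_i\epsilon_i v_{A_i}=0$ nontrivially, pairing with each $v_{B_j}$ forces $|\{i:\epsilon_i=1,\,i\neq j\}|$ to be even for every $j$, hence all $\epsilon_i=1$ and $\ell-1$ even; combined with the bound $\ell\le m+1$ from \eqref{eq:pboundssetpairs}, this determines $b_{2,2}(m)$ exactly. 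None of this appears in your write-up, and your own diagnosis that ``the genuine argument must use the structure of $H_{2,2}(n)$'' is a symptom of having set up the wrong Gram matrix rather than of needing extra structure.

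Beyond that, neither direction is actually finished. Your construction is the diagonal family, which is invalid under the correct convention; your attempted augmentation is abandoned with a promise to ``patch the cross terms.'' The construction you need is $A_i=\{i\}$, $B_i=\{i\}^c$ for $i\in[m]$, together with $A_{m+1}=B_{m+1}=[m]$ when $m$ is even. And the upper bound --- the ``parity bookkeeping'' you explicitly defer as the hard part --- is precisely the theorem; a proof that ends by announcing that the key step remains to be done is not a proof. Finally, the translation back through \eqref{eq:betamugen} from the values of $b_{2,2}(m)$ to $f_{2,2}(n)$ is itself a parity-sensitive step where it is easy to attach the wrong case to the wrong parity of $n$; a small sanity check such as $n=2$ (where $H_{2,2}(2)$ is a perfect matching on $2+2$ vertices and cannot be covered modulo $2$ by a single biclique) is worth carrying out before asserting the final case split.
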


\begin{proof}
As a result of \eqref{eq:betamugen}, it suffices to show $b_{2,2}(n)=n+1$ for $n$ even and $b_{2,2}(n)=n$ for $n$ odd.

For $n$ even, and $i \in [n]$ consider $A_i= \{i\}$ with $B_i = \{i\}^c$ and $B_{n+1} = A_{n+1} = [n]$. This is a Bollob{\'a}s set pair modulo $2$ of size $n+1$ and thus $b_{2,2}(n) = n+1$ for even $n$ by the bounds in \eqref{eq:pboundssetpairs}. 

For $n$ odd, let $m=b_{2,2}(n)$ and $(\A, \B)$ be a Bollob{\'a}s set pair modulo $2$ of size $m$. If $\{v_{A_i}: A_i \in \A \}$ is linearly independent in $\mathbb{F}_2^n$, then $m \leq n$. Thus, we may assume that there exists a non-trivial solution to $\sum_{i=1}^{m} \epsilon_i v_{A_i} = 0$ in $\mathbb{F}_2^n$. For all $j \in [m]$, it follows that \[ 0 = \bigg\langle \sum_{i=1}^{m} \epsilon_i v_{A_i}, v_{B_j} \bigg\rangle = \sum_{ \epsilon_i = 1; i \neq j} \langle v_{A_i}, v_{B_j} \rangle = \sum_{ \epsilon_i = 1; i \neq j} |A_i \cap B_j| = | \{ i: \epsilon_i =1, i \neq j \}| \pmod{2}. \] 
Since this is true for all $j \in [m]$, the existence of one non-zero $\epsilon_i$ implies they are all equal to one. In particular, $(m-1)$ is even and hence $m \in \{n,n+1\}$ is odd and thus $m=n$. 
\end{proof}

\subsection{The case $k \geq 3$} In this section, we will prove that
\begin{eqnarray}
\Bigl( \frac{1}{2} + o(1) \Bigr) \binom{n}{k} &\leq&    f_{2k,2k}(n) \leq (2k)! \binom{n}{k} \label{eq:thm2trueboundseven}  \\ 
\Bigl( \frac{1}{2} + o(1) \Bigr) \binom{n}{k-1}  &\leq&    f_{2k-1,2k-1}(n) \leq (2k-1)! \binom{n}{k-1} \label{eq:thm2trueboundsodd}
\end{eqnarray}
which imply Theorem \ref{thm:main1} by \eqref{eq:betamugen} and that the bound in the theorem is the correct order of magnitude. Using Lemma \ref{lemma:gpupperbound} with the upper bound in \eqref{eq:hypgpbounds}, we recover the upper bounds in \eqref{eq:thm2trueboundseven} and \eqref{eq:thm2trueboundsodd}. 

Next, by considering the link hypergraph of a vertex, it follows that $f_{k,k}(n) \geq f_{k-1,k-1}(n-1)$ and hence the lower bound in \eqref{eq:thm2trueboundsodd} follows by the corresponding lower bound in \eqref{eq:thm2trueboundseven}. As such, it suffices to prove the lower bound in \eqref{eq:thm2trueboundseven}.

To this end, let $\{ \mathcal{C}_i : i \in [t] \}$ be a modulo $2$ cover of $H_{2k,2k}(n)$ with $t= f_{2k,2k}(n)$. Lemma \ref{lemma:reductiontokneser} then yields that $t \geq f'_2(OK_{n:k})$ and as such we will prove a lower bound on $f'_2(OK_{n:k})$.   

As the adjacency matrix of each biclique has rank at most $2$, the subadditivity of matrix rank and taking the minor of $OK_{n:k}$ which corresponds to the Kneser graph $K_{n:k}$ give
\begin{equation}\label{eq:matrixsubadd}
f'_2(OK_{n:k}) \geq \frac{1}{2} \text{rank}(A(OK_{n:k})) \geq  \frac{1}{2} \text{rank}(A(K_{n:k}))    
\end{equation}
where the rank of the matrices is over $\mathbb{F}_2$. Let $M_{n,k,l} \in \{0,1\}^{ \binom{n}{k} \times \binom{n}{l}}$ where the $(K,L)$ entry for $K \in \binom{[n]}{k} $ and $L \in \binom{[n]}{l}$ is the indicator of $K \subseteq L$ and note that $M_{n,k,n-k} = A(K_{n:k})$.

Wilson \cite[Theorem 1]{W} determined the rank for matrices $M_{n,k,l}$ over $\mathbb{F}_p$ for $k \leq \min \{l,n-l\}$: 
\begin{equation}\label{eq:prankkneser}
\text{rank}(M_{n,k,l}) = \sum_{i: \; p \nmid \binom{n-k-i}{k-i}} \binom{n}{i} - \binom{n}{i-1}.     
\end{equation}

Since the corresponding binomial coefficients when $i \in [k-3,k]$ are odd when $n-2k=24\pmod{36}$, applying \eqref{eq:prankkneser} when $p=2$ gives the following for the rank over $\mathbb{F}_2$:
\begin{equation}\label{eq:rankkneser2}
\text{rank}(A(K_{n:k})) \geq \binom{n}{k} - \binom{n}{k-4}.
\end{equation}

Hence, standard binomial estimates and using \eqref{eq:matrixsubadd} and \eqref{eq:rankkneser2} give \[ f_{2k,2k}(n) \geq f'_2(OK_{n:k}) \geq \Bigl( \frac{1}{2} + o(1)\Bigr) \binom{n}{k}.   \]
Taking the bounds from \eqref{eq:thm2trueboundseven} and \eqref{eq:thm2trueboundsodd} and noting in \eqref{eq:betamugen} that $f_{k,k}(n) = \min\{m : b_{k,k}(m) \geq n\}$, we recover the following bounds on $b_{k,k}(n)$: 
\begin{eqnarray*}
\Bigl( \frac{1}{2k} + o(1)  \Bigr) n^{\frac{1}{k}}  &\leq& b_{2k,2k}(n) \leq \Bigl(k2^{\frac{1}{k}}\Bigr)n^{\frac{1}{k}} \\ \Bigl( \frac{1}{2k^2}  + o(1)    \Bigr)n^{\frac{1}{k-1}}  &\leq& b_{2k-1,2k-1}(n) \leq \Bigl((k-1)2^{\frac{1}{k-1}} + o(1)\Bigr)n^{\frac{1}{k-1}}. 
\end{eqnarray*}

\section{Proof of Theorem \ref{thm:main2}} \label{sec:proofofmainthm2}  
In this section, we will prove that if $2t-2 \leq k$, then
\begin{equation}\label{eq:thm3truebounds}
f_{k,t}(n) \leq  \Bigl( \frac{t^k}{t!}+o(1)\Bigr)n^{t-1} \hspace{3mm} \text{and} \hspace{3mm}  b_{k,t}(n) \leq \Bigl(t-1+o(1)\Bigr)n^{\frac{1}{t-1}}     .   
\end{equation}
Since $f_{k,t}(n) = \min\{m : b_{k,t}(m) \geq n\}$, the upper bound on $f_{k,t}(n)$ in \eqref{eq:thm3truebounds} implies a lower bound on $b_{k,t}(n)$ which together with the upper bound on $b_{k,t}(n)$ in \eqref{eq:thm3truebounds} proves Theorem \ref{thm:main2} as they imply
\[ \Bigl( \Bigl( \frac{t!}{t^k}\Bigr)^{\frac{1}{t-1}} + o(1) \Bigr) n^{\frac{1}{t-1}} \leq b_{k,t}(n) \leq \Bigl(t-1+o(1)\Bigr)n^{\frac{1}{t-1}} .      \]

\subsection{Upper bound on $f_{k,t}(n)$} 
Let $\phi: H_{k,t}(n) \to \{ \pi : |\pi| \geq t  \}$ be the correspondence of an edge of $H_{k,t}(n)$ to a set partition of $[k]$ where each part consists of the coordinates with equal indices. For a set partition $\pi$ of $[k]$, define $H_{k,t}^\pi(n) = \{ e \in H_{k,t}(n) : \phi(e) = \pi   \}$ so that 
\begin{equation}\label{eq:hypdecompsetpartition}
 H_{k,t}(n) = \bigsqcup\limits_{ |\pi| \geq t} H_{k,t}^\pi(n).   
\end{equation}
It follows that $f'_k(H_{k,t}^\pi(n)) \leq (n)_{(|\pi|-1)} = n(n-1) \cdots (n- |\pi|-2)$ by taking complete $k$-partite $k$-graphs corresponding to choosing a distinct singleton for the parts corresponding to $|\pi|-1$ of the parts and taking the remaining parts to be all of the remaining $(n-|\pi|+1)   $ vertices. Moreover, adding the complete $k$-partite $k$-graph $[n]^k$ to a minimal cover, it follows that 
\begin{equation}\label{eq:addcomp}
f_{k,t}(n) \leq 1 + f'_k([n]^{k} \setminus H_{k,t}(n)).     
\end{equation}

Since $f'_k(\cdot)$ is subadditive with respect to disjoint unions, using \eqref{eq:hypdecompsetpartition} gives
\begin{equation}\label{eq:upperboundsstirling}
f'_k([n]^{k} \setminus H_{k,t}(n)) \leq \sum_{|\pi| \leq t-1} f'_k(H_{k,t}^\pi(n)) \\ \leq \sum_{|\pi| \leq t-1} (n)_{(|\pi|-1)} = (S(k,t) + o(1))n^{t-1}    
\end{equation}
where $S(k,t)$ is the Stirling number of the second kind. We then recover the upper bound on $f_{k,t}(n)$ in \eqref{eq:thm3truebounds} by using the bound $S(k,t) \leq t^k / t!$ together with \eqref{eq:addcomp} and \eqref{eq:upperboundsstirling}.

\subsection{Upper bound on $b_{k,t}(n)$} 

We first prove the following lemma which relates $b_{k,t}(n)$ to $b_{2,2}(n)$. 

\begin{lemma}\label{lemma:middleupperbound}
Given $n \geq 1$, let $\alpha=\alpha(k,t) = 2t-k+2$. Then letting $m = b_{k,t}(n)$,
\[          
b_{2,2}(n) \geq 
\begin{cases}
{m \choose t-1} \hspace{15pt} \text{if} \hspace{5pt}  2t-2 \leq k \vspace{1mm} \\
{m -\alpha \choose k-t+1}  \hspace{15pt} \text{if} \hspace{5pt}  2t-2 > k \\
\end{cases}
\]
In particular, both quantities are less than or equal to $n+1$ by \eqref{eq:pboundssetpairs}. 
\end{lemma}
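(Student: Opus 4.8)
The plan is to turn a Bollob\'as set $(k,t)$-tuple modulo $2$ of size $m=b_{k,t}(n)$ on ground set $[n]$ into a Bollob\'as set pair modulo $2$ on the \emph{same} ground set $[n]$ of the stated size; the bound $n+1$ is then immediate from \eqref{eq:pboundssetpairs} at $p=2$, which gives $b_{2,2}(n)\le \binom{n}{0}+\binom{n}{1}=n+1$. Write $\A_j=\{A_{j,i}:i\in[m]\}$ for the given tuple, so $|\bigcap_{j=1}^k A_{j,i_j}|$ is even exactly when $|\{i_1,\dots,i_k\}|<t$. The construction will index the new pair by subsets of a suitable size, defining each new set as an intersection of a few of the $A_{j,i}$, chosen so that for the two new sets the combined intersection is a genuine $k$-fold intersection $\bigcap_{l=1}^k A_{l,i_l}$ (each coordinate $l$ used exactly once), whose parity — governed by the number of distinct $i_l$ — is even precisely when the two index sets coincide.

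First I would handle $2t-2\le k$. Index the new families by $\binom{[m]}{t-1}$ (compatibly for both families), and for $S=\{s_1<\dots<s_{t-1}\}$ put
\[ \mathcal{A}_S=\bigcap_{l=1}^{t-1}A_{l,s_l}\ \cap\ \bigcap_{l=2t-1}^{k}A_{l,s_1},\qquad \mathcal{B}_S=\bigcap_{l=t}^{2t-2}A_{l,s_{l-t+1}}, \]
reading an empty intersection (which occurs in $\mathcal{A}_S$ when $2t-2=k$) as $[n]$. Since the blocks $\{1,\dots,t-1\}\cup\{2t-1,\dots,k\}$ and $\{t,\dots,2t-2\}$ partition $[k]$, for any $S,T$ the set $\mathcal{A}_S\cap\mathcal{B}_T$ equals $\bigcap_{l=1}^k A_{l,i_l}$ whose index multiset is $S$, then $T$, then copies of $s_1\in S$; it therefore has $|S\cup T|$ distinct indices, which equals $t-1<t$ exactly when $S=T$ and lies in $[t,2t-2]$ otherwise. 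So $\{(\mathcal{A}_S,\mathcal{B}_S)\}$ is a Bollob\'as set pair modulo $2$ and $b_{2,2}(n)\ge\binom{m}{t-1}$.

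For $2t-2>k$ I would do the analogous thing with $r:=k-t+1$ in place of $t-1$: here $2r<k$, so there are $k-2r=\alpha-4\ge 1$ ``leftover'' coordinates. Reserve the top $\alpha$ indices $Z=\{m-\alpha+1,\dots,m\}$, fix distinct $z_{r+1},\dots,z_{k-r}\in Z$ for the leftover coordinates of $\mathcal{A}_S$, index the new families by $\binom{[m-\alpha]}{r}$, and for $S=\{s_1<\dots<s_r\}\subseteq[m]\setminus Z$ put $\mathcal{A}_S=\bigcap_{l=1}^{r}A_{l,s_l}\cap\bigcap_{l=r+1}^{k-r}A_{l,z_l}$ and $\mathcal{B}_S=\bigcap_{l=k-r+1}^{k}A_{l,s_{l-(k-r)}}$. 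Again the coordinate blocks partition $[k]$, so $\mathcal{A}_S\cap\mathcal{B}_T$ has distinct-index count $|S\cup T|+(k-2r)$ (the $z_l$ being distinct and outside $[m-\alpha]$), which equals $r+(k-2r)=t-1$ when $S=T$ and is at least $(r+1)+(k-2r)=t$ otherwise; hence $b_{2,2}(n)\ge\binom{m-\alpha}{k-t+1}$.

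The only thing requiring care — the ``main obstacle'', such as it is — is the bookkeeping: confirming in each case that the chosen coordinate blocks partition $[k]$ so that $\mathcal{A}_S\cap\mathcal{B}_T$ really is of the form $\bigcap_{l=1}^k A_{l,i_l}$ with no coordinate used twice with conflicting indices, that the filler indices ($s_1$, resp.\ the $z_l$) are disjoint from every $S$ and $T$, and that the distinct-index count sits strictly below $t$ if and only if $S=T$. The degenerate subcases ($2t-2=k$, with no leftover coordinates available for $\mathcal{A}_S$; and checking $k-2r\ge 1$ so that a reserved $z_l$ genuinely exists when $2t-2>k$) need only a passing remark, and nothing beyond \eqref{eq:pboundssetpairs} is used to finish.
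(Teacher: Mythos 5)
Your proof is correct and follows the same route as the paper's: index the new pair by $(t-1)$-subsets (resp.\ $(k-t+1)$-subsets, with a reserved block of indices) of $[m]$, form $\mathcal{A}_S$ by intersecting one block of coordinates over $S$ and $\mathcal{B}_T$ by intersecting a disjoint block over $T$, and read off the parity of $|\mathcal{A}_S\cap\mathcal{B}_T|$ from the number of distinct indices in the resulting $k$-fold intersection. Your version is in fact slightly more careful than the paper's: when $2t-2<k$ the paper's sets $A_I,B_I$ involve only the families $\mathcal{A}_1,\dots,\mathcal{A}_{2t-2}$, so $A_I\cap B_J$ is not literally of the form $\bigcap_{l=1}^{k}A_{l,i_l}$ and the hypothesis does not directly govern its parity, whereas your padding of $\mathcal{A}_S$ by $\bigcap_{l=2t-1}^{k}A_{l,s_1}$ supplies exactly the missing coordinates without changing the count of distinct indices.
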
 

\begin{proof}
Let $m= b_{k,t}(n)$ for $2t-2 \leq k$ and let $(\A_1, \ldots, \A_k)$ be a Bollob{\'a}s set $(k,t)$-tuple modulo $2$ of size $m$ where $\A_i = \{ A_{j,i} \subseteq [n] : i \in [m] \}$ for all $j \in [k]$. For each $I = \{i_1 < \cdots < i_{t-1}\} \in \binom{[m]}{t-1}$, define the families $\A := \{A_I\}$ and $\B:=\{ B_I\}$ where: 
\begin{eqnarray*}
A_I &:=& A_{1,i_1} \cap \cdots \cap A_{t-1, i_{t-1}} \\
B_I &:=& A_{t,i_1} \cap \cdots \cap A_{2t-2, i_{t-1}}
\end{eqnarray*}
Note that $(\A,\B)$ is a Bollob{\'a}s set pair modulo $2$ and hence the result follows.  

Similarly, for $2t-2 > k$, let $m= b_{k,t}(n)$ and let $(\A_1, \ldots, \A_k)$ be a Bollob{\'a}s set $(k,t)$-tuple modulo $2$ of size $m$ where $\A_i:= \{ A_{j,i} \subseteq [n]: i \in [m] \}$ for all $j \in [k]$. Consider the index set \[ \F := \binom{[\alpha +1, m]}{k-t+1}. \]
For $I = \{i_1 < \cdots < i_{k-t+1}\} \in \F$, define the families $\A := \{A_I\}$ and $\B:=\{ B_I\}$ where 
\begin{eqnarray*}
A_I &:=& A_{1,1}  \cap \cdots \cap A_{\alpha, \alpha} \cap A_{\alpha+1, i_1} \cap \cdots \cap A_{t-1,i_{k-t+1}} \\ 
B_I &:=& A_{k-t+2,i_1} \cap \cdots \cap A_{k, i_{t-1}}. 
\end{eqnarray*} 
Note that $(\A,\B)$ is a Bollob{\'a}s set pair modulo $2$ and hence the result follows. 
\end{proof}

We are now able to prove the upper bound on $b_{k,t}(n)$ in \eqref{eq:thm3truebounds} when $2t-2 \leq k$. Letting $m= b_{k,t}(n)$, Lemma \ref{lemma:middleupperbound} yields that ${ m \choose t-1} \leq n+1$. Using the bound ${ m \choose t-1} \geq (m/(t-1))^{t-1}$, it follows that \[ m \leq \Bigl(t-1+o(1)\Bigr)n^{\frac{1}{t-1}}.   \] 

\section{Concluding Remarks}

$\bullet$ Conjecture \ref{conj:main} involves extending Theorem \ref{thm:main1} to primes $p>2$. For $k \geq 3$, and $p=2$, we used a hypergraph extension of the Graham-Pollak theorem \cite{GP} to show that Theorem \ref{thm:main1} is best possible. For $p>2$, a promising approach seems to be a hypergraph extension of Alon's \cite{ALON1} result for bipartite coverings of order $\ell$ of the complete graph. To this end, for an integer $\ell \geq 1$, we say complete $k$-partite $k$-graphs $H_1, \ldots, H_m$ form a \textit{$k$-partite covering of order $\ell$} of $\binom{[n]}{k}$ if each edge of $\binom{[n]}{k}$ lies in at least one and at most $\ell$ of the $H_1, \ldots, H_m$. The following conjecture would show that Conjecture \ref{conj:main} is best possible: 

\begin{conj}
Let $k \geq 2$ and $\ell \geq 1$. Let $H_1, \ldots, H_m$ be complete $k$-partite $k$-graphs which form a $k$-partite covering of order $\ell$ of $\binom{[n]}{k}$ of minimum size. Then $m = \Theta(n^{  \lfloor k/2 \rfloor / \ell})$.   
\end{conj}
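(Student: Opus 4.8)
The plan is to establish the two bounds separately: the lower bound $m=\Omega(n^{\lfloor k/2\rfloor/\ell})$ by a rank argument that fuses Alon's entrywise-product trick for order-$\ell$ bipartite coverings with the Kneser-graph rank estimate already used in Section~\ref{sec:proofofmainthm1}, and the upper bound $m=O(n^{\lfloor k/2\rfloor/\ell})$ by a recursive product construction built from a near-optimal ordinary ($\ell=1$) Graham--Pollak hypergraph covering. For the lower bound, suppose first that $k=2s$ is even, let $H_1,\dots,H_m$ be complete $k$-partite $k$-graphs forming an order-$\ell$ covering of $\binom{[n]}{k}$, and let $c\colon\binom{[n]}{k}\to\{1,\dots,\ell\}$ be the covering-multiplicity function, extended to a function on $[n]^k$ that is symmetric and vanishes on non-injective tuples. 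Writing $H_i=\prod_{j=1}^k V_{i,j}$ and using that the $V_{i,j}$ are pairwise disjoint, this extension equals $\sum_{i=1}^m\sum_{\sigma\in S_k}\bigotimes_{j=1}^k\mathbf 1_{V_{i,\sigma(j)}}$ on all of $[n]^k$, a sum of $m\,k!$ product tensors; flattening the first $s$ coordinates against the last $s$ and passing to $s$-subsets yields a matrix $C\in\mathbb Z^{\binom{[n]}{s}\times\binom{[n]}{s}}$ with $C_{A,B}=c(A\cup B)$ if $A\cap B=\emptyset$ and $C_{A,B}=0$ otherwise, so that $\text{rank}_{\R}(C)\le k!\,m$.

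Now form the entrywise product $P=\bigodot_{j=1}^{\ell}(C-jJ)$ with $J$ the all-ones matrix. On a disjoint pair $(A,B)$ the factor with $j=c(A\cup B)\in\{1,\dots,\ell\}$ vanishes, while on any intersecting pair $P_{A,B}=\prod_{j=1}^{\ell}(-j)=(-1)^{\ell}\ell!$; hence $P=(-1)^{\ell}\ell!\,\bigl(J-A(K_{n:s})\bigr)$. Since for $n$ large the Kneser graph $K_{n:s}$ has full $\R$-rank $\binom{n}{s}$ (its eigenvalues $(-1)^i\binom{n-s-i}{s-i}$ are all nonzero), $\text{rank}_{\R}(P)\ge\binom{n}{s}-1$, and combining with $\text{rank}(P)\le\prod_{j=1}^{\ell}\text{rank}(C-jJ)\le(k!\,m+1)^{\ell}$ gives $m=\Omega(n^{s/\ell})=\Omega(n^{\lfloor k/2\rfloor/\ell})$. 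For $k$ odd, restricting an order-$\ell$ covering of $\binom{[n]}{k}$ to the link of a vertex produces an order-$\ell$ covering of $\binom{[n-1]}{k-1}$ with at most $m$ graphs, and since $\lfloor(k-1)/2\rfloor=\lfloor k/2\rfloor$ the even case applies.

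For the matching upper bound, write $n=a^{\ell}$ and induct on $\ell$; the base case $\ell=1$ is the $O(a^{\lfloor k/2\rfloor})$-graph covering from \eqref{eq:hypgpbounds} together with its odd-uniform analogue. For $\ell\ge2$, partition $[n]$ into $a$ blocks of size $a^{\ell-1}$; a $k$-set with one element per block is covered by blowing up a near-optimal ordinary cover of $\binom{[a]}{k}$ along the block partition, using $O(a^{\lfloor k/2\rfloor})$ graphs, and a $k$-set that places $t_1,\dots,t_r$ elements in the $r<k$ blocks it meets is covered by products $F_1\times\cdots\times F_r$, where $F_p$ is drawn from an inductively built cover of the copy of $\binom{[a^{\ell-1}]}{t_p}$ on the $p$-th of those blocks. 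The recursion must be arranged so that the local covering multiplicities \emph{add} across the blocks a $k$-set meets rather than multiply, keeping the total multiplicity at most $\ell$ and the number of graphs at $O(\ell\,a^{\lfloor k/2\rfloor})=O(n^{\lfloor k/2\rfloor/\ell})$.

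I expect this last point to be the main obstacle. For $k=2$ the construction is easy --- a $k$-set belongs to a unique block pattern and is covered once for each coordinate in which its two elements differ, so multiplicities add automatically --- but for $k\ge3$ a $k$-set can split across blocks into a part of size $\ge2$, and the block-level Graham--Pollak structure and the within-block recursion must be chosen simultaneously so that (i) every $k$-set is covered, (ii) its multiplicity does not blow up multiplicatively in the number of blocks it meets, and (iii) the number of graphs used at each scale stays $O(a^{\lfloor k/2\rfloor})$ rather than $O(a^{k})$. Pinning down the correct ``order-$\ell$ Graham--Pollak cover'' of the product ground set $[a]^{\ell}$ satisfying all three requirements is the heart of the problem; a promising alternative is to transfer everything to an order-$\ell$ biclique-covering question for the ordered Kneser graph $OK_{n:\lceil k/2\rceil}$ by extending Lemma~\ref{lemma:reductiontokneser}, and to settle that purely graph-theoretically.
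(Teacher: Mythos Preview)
This statement is posed in the paper as an open \emph{conjecture}; the paper gives no proof, so there is nothing to compare your attempt against. Your proposal should be read as partial progress on an open problem.

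Your lower bound $m=\Omega(n^{\lfloor k/2\rfloor/\ell})$ is correct. The symmetrisation of the covering-multiplicity tensor into $m\,k!$ rank-one summands, the flattening to a $\binom{[n]}{s}\times\binom{[n]}{s}$ matrix $C$ supported on the Kneser pattern, and the Hadamard-product polynomial $\bigodot_{j=1}^{\ell}(C-jJ)$ all work exactly as you describe; the only point worth noting explicitly is that the parts $V_{i,1},\dots,V_{i,k}$ of each $H_i$ are pairwise disjoint because the edges being covered are $k$-\emph{sets}, which you use to make the symmetrised tensor vanish on non-injective tuples. This is a clean fusion of Alon's order-$\ell$ biclique-cover bound with the Cioab\u{a}--K\"undgen--Verstra\"ete flattening and, as far as the paper is concerned, settles one direction of the conjecture.

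The upper bound is the direction the paper actually needs (it is what would show Conjecture~\ref{conj:main} best possible), and here you have not closed the gap --- nor do you claim to. The obstruction you identify is genuine: in Alon's $k=2$ construction on $[a]^{\ell}$, a pair is covered once per coordinate in which its endpoints differ, and these contributions are disjoint by design, so multiplicities add. For $k\ge 3$ a $k$-set can split across blocks as $t_1+\dots+t_r=k$ with some $t_p\ge 2$, and any product $F_1\times\dots\times F_r$ of lower-uniformity covers multiplies multiplicities rather than adding them; enforcing additivity while keeping only $O(a^{\lfloor k/2\rfloor})$ graphs per scale is precisely the missing idea. Your alternative of passing to an order-$\ell$ biclique cover of $OK_{n:\lceil k/2\rceil}$ is natural, but note that Lemma~\ref{lemma:reductiontokneser} goes the wrong way for constructions --- it converts a $k$-partite cover into a biclique cover, not conversely --- so a different reduction would be required. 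In short: your argument resolves half of the conjecture, and the constructive half remains open.
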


$\bullet$ In this paper, we used a result of Wilson \cite{W} on the rank over $\mathbb{F}_2$ of an incidence matrix to prove Theorem \ref{thm:main1}. To apply this same argument for general primes $p$ in Conjecture \ref{conj:main}, one would need to replace the entries which correspond to incidences with \textit{any} nonzero entry of $\mathbb{F}_p$. For $p=2$, it follows that this corresponds to the classical $\{0,1\}$ incidence matrix. A positive solution to the following problem would imply Conjecture \ref{conj:main}: 

\begin{problem}
Let $k \geq 2$ and $p$ be prime. Let $M^\star_{n,k,n-k}$ be any $\binom{n}{k} \times \binom{n}{n-k}$ matrix with entries in $\mathbb{F}_p$ where the $(K,L)$ entry for $K \in \binom{[n]}{k} $ and $L \in \binom{[n]}{n-k}$ is a nonzero element of $\mathbb{F}_p$ if $K \subseteq L$ and is zero otherwise. Prove that $\text{rank}(M^\star_{n,k,n-k}) = \Omega( n^{k/(p-1)})$ where the rank is over $\mathbb{F}_p$. 
\end{problem}

$\bullet$ In this paper, we prove bounds on $f_{k,t}(n)$ to prove Theorem \ref{thm:main1} and \ref{thm:main2}. These bounds imply
\begin{eqnarray}
\frac{1}{2 (\lfloor k/2 \rfloor!) } &\leq& \lim_{n \to \infty} \frac{f_{k,k}(n)}{n^{\lfloor k/2 \rfloor}} \leq (k)_{\lceil k/2 \rceil} \label{eq:limitfkk}  \\ 
\frac{1}{(t-1)^{t-1}} &\leq& \lim_{n \to \infty} \frac{f_{k,t}(n)}{n^{t-1}} \leq \frac{t^k}{k!} \label{eq:limitsmallt}
\end{eqnarray}
where we let $(k)_r=k(k-1)\cdots(k-r+1)$ denote the falling factorial and assume that $2t-2 \leq k$ in \eqref{eq:limitsmallt}. We believe that the upper bound in \eqref{eq:limitfkk} might be closer to the truth. It is likely that determining the limiting value is challenging as the problem is related to the hypergraph extension of the Graham-Pollak problem for which asymptotically tight bounds are not known. We also believe that the upper bound in \eqref{eq:limitsmallt} is closer to the truth.

$\bullet$ In this paper, we determined the order of magnitude for $b_{k,t}(n)$ in the case when $k=t$ and in the case when $2t-2 \leq k$. For small values of $t$ and $k$, more delicate arguments give better bounds. First, when $t=2$ and $k \geq2$ it is straightforward to see $ n-1 \leq b_{k,2}(n) \leq b_{2,2}(n)$ for all $n\geq 1$.

Although the asymptotics of the largest Bollob{\'a}s $(3,3)$-tuple\footnote{the sizes of the intersections are zero and nonzero accordingly as opposed to zero and nonzero modulo $2$} and the largest Bollob{\'a}s $(4,3)$-tuple\footnotemark[1] are wide open (see \cite{OV}), in the modulo $2$ setting we are able to show: 
\begin{eqnarray}
\frac{n-1}{3} &\leq& b_{3,3}(n) \leq n+2 \label{eq:smallvalues33}\\
\frac{\sqrt{3n+1}-2}{3} &\leq& b_{4,3}(n) \leq \sqrt{2n+2} + 4 \label{eq:smallvalues43}.
\end{eqnarray}
We start by sketching the proofs of the lower bounds in \eqref{eq:smallvalues33} and \eqref{eq:smallvalues43} by proving upper bounds on $f_{3,3}(n)$ and $f_{4,3}(n)$ and using \eqref{eq:betamugen}. For $f_{3,3}(n)$, a modulo $2$ cover of $H_{3,3}(n)$ is given by $\{i\} \times \{i\} \times [n]$, $\{i\} \times [n] \times \{i\}$, and $[n] \times \{i\} \times \{i\}$ for any index $i \in [n]$ together with $[n]^{3}$. Hence $f_{3,3}(n) \leq 3n+1$. For $f_{4,3}(n)$, a similar construction gives $f_{4,3}(n) \leq f_{4,2}(n) + 3n(n-1) + 6n$. It is straightforward to see $f_{4,2}(n) \leq n+1$ and hence $f_{4,3}(n) \leq n+1 + 3n(n-1) + 6n = 3n^2+4n+1$.  

We next sketch the proofs of the upper bounds in \eqref{eq:smallvalues33} and \eqref{eq:smallvalues43}. The upper bound on $b_{4,3}(n)$ follows from Lemma \ref{lemma:middleupperbound}. For the upper bound on $b_{3,3}(n)$, consider any Bollob{\'a}s set triple modulo $2$ denoted by $(\A_1, \A_2, \A_3)$ where $\A_j:= \{ A_{j,i} \subseteq [n] : i \in [m]   \}$ and define $ \F_1 = \{ A_{1,1} \cap A_{2,i} : 2 \leq i \leq m\}$ and $\F_2 = \{ A_{1,1} \cap A_{3,i} : 2 \leq i \leq m\}.$ It then follows that $(\F_1, \F_2)$ is a Bollob{\'a}s set pair modulo $2$. 

$\bullet$ Conjecture \ref{conj:larget} involves determining the order of magnitude of $b_{k,t}(n)$ when $2t-2>k$. Theorem \ref{thm:main1} verifies Conjecture \ref{conj:larget} when $t=k$. When $(k,t)= (5,4)$, we can use a similar argument as in the upper bound in \eqref{eq:smallvalues33} to show that $b_{5,4}(n)-1 \leq b_{4,3}(n)$.  
Using Lemma \ref{lemma:middleupperbound} and the upper bound in \eqref{eq:smallvalues43}, it then follows that $b_{5,4}(n)=\Theta(n^{1/2})$. This verifies Conjecture \ref{conj:larget} in the case where $(k,t)= (5,4)$. In general, Lemma \ref{lemma:middleupperbound} gives $b_{k,t}(n) = O(n^{1/(k-t+1)})$ and a more delicate argument similar to Lemma \ref{lemma:gpupperbound} together with \eqref{eq:betamugen} gives a construction of size $ \Theta(n^{1/ \lfloor k/2 \rfloor})$. The first open case is $(k,t) = (6,5)$ for which $b_{6,5}(n) = \Omega(n^{1/3})$ and $b_{6,5}(n) = O(n^{1/2})$.

\newpage

\end{document}